\definecolor{e-mail}{rgb}{0,.40,.80}
\definecolor{reference}{rgb}{.20,.60,.22}
\definecolor{citation}{rgb}{0,.40,.80}
\newtheorem{theorem}{Theorem}
\newtheorem{lemma}[theorem]{Lemma}
\newtheorem{corollary}[theorem]{Corollary}
\newtheorem{proposition}[theorem]{Proposition}
\newtheorem{definition}[theorem]{Definition}
\newtheorem{example}[theorem]{Example}
\newtheorem{remark}[theorem]{Remark}
\newcommand{\ZN}{{\mathbb Z}_{\geqslant 0}}
\newcommand{\Z}{{\mathbb Z}}
\newcommand{\K}{\textbf{k}}
\DeclareMathOperator{\id}{id}
\DeclareMathOperator{\ord}{ord}
\DeclareMathOperator{\rank}{rank}
\DeclareMathOperator{\antik}{anti-{\it k}}
\DeclareMathOperator{\lead}{lead}
\DeclareMathOperator{\init}{init}
\DeclareMathOperator{\sepp}{sep}
\DeclareMathOperator{\lcd}{lcd}
\begin{document}

\begin{frontmatter}

\title{New order bounds in differential elimination algorithms\tnoteref{t1}}
\tnotetext[t1]{This work was partially supported by the NSF grants CCF-095259,  CCF-1563942, DMS-1606334, by the NSA grant \#H98230-15-1-0245, by CUNY CIRG \#2248, by PSC-CUNY grant \#69827-00 47, by the Austrian Science Fund FWF grant Y464-N18.}
\author{Richard Gustavson}
\address{CUNY Graduate Center, Ph.D. Program in Mathematics, 365 Fifth Avenue, New York, NY 10016, USA}
\ead{rgustavson@gradcenter.cuny.edu}

\author{Alexey Ovchinnikov}
\address{CUNY Queens College, Department of Mathematics, 65-30 Kissena Boulevard, Queens, NY 11367, USA}
\ead{aovchinnikov@qc.cuny.edu}

\author{Gleb Pogudin}
\address{Johannes Kepler University, Institute for Algebra, Science Park II, 3rd Floor, 4040 Linz, Austria}
\ead{pogudin@algebra.uni-linz.ac.at}

\begin{abstract}
We present a new upper bound for the orders of derivatives in the Rosenfeld-Gr\"obner algorithm. This algorithm computes a regular decomposition of a radical differential ideal in the ring of differential polynomials over a differential field of characteristic zero with an arbitrary number of commuting derivations.  This decomposition can then be used to test for membership in the given radical differential ideal. In particular, this algorithm allows us to determine whether a system of polynomial PDEs is consistent.

Previously, the only known order upper bound was given by Golubitsky, Kondratieva, Moreno Maza, and Ovchinnikov for the case of a single derivation.  We achieve our bound by associating to the algorithm antichain sequences whose lengths can be bounded using the results of Le\'on S\'anchez and Ovchinnikov.
\end{abstract}

\begin{keyword}
Polynomial differential equations; differential elimination algorithms; computational complexity
\end{keyword}

\end{frontmatter}


\section{Introduction}
The Rosenfeld-Gr\"obner algorithm is a fundamental algorithm in the  algebraic theory of differential equations.  This algorithm, which first appeared in \citep{BLOP,Boulier2009}, takes as its input a finite set $F$ of differential polynomials and outputs a representation of the radical differential ideal generated by $F$ as a finite intersection of regular differential ideals.  The algorithm has many applications; for example, it can be used to test membership in a radical differential ideal, and, in conjunction with the differential Nullstellensatz, can be used to test the consistency of a system of polynomial differential equations.  See \citep{GKMO} for a history of the development of the Rosenfeld-Gr\"obner algorithm and similar decomposition algorithms.

The Rosenfeld-Gr\"obner algorithm has been implemented in  {\sc Maple} as a part of the {\tt DifferentialAlgebra} package.  In order to determine the complexity of the algorithm, we need to (among other things) find an upper bound on the orders of derivatives that appear in all intermediate steps and in the output of the algorithm.  The first step in answering this question was completed in \citep{GKMO}, in which an upper bound in the case of a single derivation and any ranking on the set of derivatives was found.  If there are $n$ unknown functions and the order of the original system is $h$, the authors showed that an upper bound on the orders of the output of the Rosenfeld-Gr\"obner algorithm is $h(n-1)!$.

In this paper, we extend this result by finding an upper bound for the orders of derivatives that appear in the intermediate steps and in the output of the Rosenfeld-Gr\"obner algorithm in the case of an arbitrary number of commuting derivations and a weighted ranking on the derivatives.  We first compute an upper bound for the weights of the derivatives involved for an arbitrary weighted ranking; by choosing a specific weight, we obtain an upper bound for the orders of the derivatives.  For this, we construct special antichain sequences in the set $\ZN^m \times \{1,\ldots,n\}$ equipped with a specific partial order. We then use \citep{LSO} to estimate the lengths of our sequences. A general analysis of lengths of antichain sequences began in \citep{Pierce} and continued in \citep{FLS}.

We show that an upper bound for the weights of derivatives in the intermediate steps and in the output of the Rosenfeld-Gr\"obner algorithm is given by $hf_{L+1}$, where $h$ is the weight of our input system of differential equations, $\{f_0,f_1,f_2,\dots\}$ is the Fibonacci sequence $\{0,1,1,2,3,5,\dots\}$, and $L$ is the maximal possible length of a certain antichain sequence (that depends solely on $h$, the number $m$ of derivations, and the number $n$ of unknown functions).  For $m=2$, we refine this upper bound in a new way by showing that the weights of the derivatives in question are bounded above by 
a sequence defined similarly to the Fibonacci sequence but with a slower growth rate.

By choosing a specific weight, we are able to produce an upper bound for the orders of the derivatives in the intermediate steps and in the output of the Rosenfeld-Gr\"obner algorithm.  Note that this bound is different from the upper bounds for the effective differential Nullstellensatz \citep{DJS,GKO}, which are higher and also depend on the degree of the given system of differential equations.  
Our result is an improvement of \citep{GOP} because it allows us to compute sharper order upper bounds with respect to specific derivations than the previous upper bound did, and because of the refinement in the  case $m=2$.
For example, if $n=2$ and $h=3,4,5$, the new bound is 3, 8, 33 times better, respectively.

The paper is organized as follows.  In Section~\ref{sec:background}, we present the background material from differential algebra that is necessary to understand the Rosenfeld-Gr\"obner algorithm.  In Section~\ref{sec:RGalg}, we describe this algorithm as it is presented in \citep{Hubert}, as well as two necessary auxiliary algorithms.  In Section~\ref{sec:bound}, we prove our main result on the upper bound.  In Section~\ref{sec:specific}, we calculate the upper bound for specific values using the results of \citep{LSO}.  In Section~\ref{sec:lower}, we give an example showing that the lower bound for the orders of derivatives in the Rosenfeld-Gr\"obner algorithm is at least double-exponential in the number of derivations.


\section{Background on differential algebra}\label{sec:background}
In this section, we present background material from  differential algebra that is pertinent to the Rosenfeld-Gr\"obner algorithm.  For a more in-depth discussion, we refer the reader to \citep{Hubert, Kolchin}.

\begin{definition}
A \emph{differential ring} is a commutative ring $R$ with a collection of $m$ commuting derivations $\Delta = \{\partial_1,\dots,\partial_m\}$ on $R$.  
\end{definition}
\begin{definition}
An ideal $I$ of a differential ring is a \emph{differential ideal} if  $\delta a \in I$ for all $a \in I$, $\delta \in \Delta$.
\end{definition}
For a set $A \subseteq R$, let $(A)$, $\sqrt{(A)}$, $[A]$, and $\{A\}$ denote the smallest ideal, radical ideal, differential ideal, and radical differential ideal containing $A$, respectively.  If $\mathbb{Q} \subseteq R$, then $\{A\} = \sqrt{[A]}$.

\begin{remark} In this paper, as usual, we also use the braces $\{a_1, a_2,\ldots\}$ to denote the set containing the elements $a_1,a_2,\ldots$. Even though this notation conflicts with the above notation for radical differential ideals (used here for historical reasons), it will be clear from the context which of the two objects we mean in each particular situation.
\end{remark}

In this paper, $\K$ is a differential field of characteristic zero with $m$ commuting derivations.  The set of derivative operators is denoted by
\[
\Theta := \left\{\partial_1^{i_1}\cdots \partial_m^{i_m}: i_j \in \ZN, 1 \leqslant j \leqslant m\right\}.
\]
For $Y = \{y_1,\dots,y_n\}$ a set of $n$ {\em differential indeterminates}, the set of derivatives of $Y$ is 
\[
\Theta Y := \{\theta y : \theta \in \Theta, y \in Y\}.
\]
Then the ring of \emph{differential polynomials} over $\K$ is defined to be
\[
\K\{Y\} = \K\{y_1,\dots,y_n\} := \K[\theta y : \theta y \in \Theta Y].
\]
We can naturally extend the derivations $\partial_1,\dots,\partial_m$ to the ring $\K\{Y\}$ by defining 
\[
\partial_j \left(\partial_1^{i_1}\cdots \partial_m^{i_m} y_k\right) := \partial_1^{i_1}\cdots \partial_j^{i_j+1}\cdots \partial_m^{i_m}y_k.
\]
For any $\theta = \partial_1^{i_1}\cdots \partial_m^{i_m} \in \Theta$, we define the \emph{order} of $\theta$ to be
\[
\ord(\theta) := i_1 + \cdots + i_m.
\]
For any derivative $u = \theta y \in \Theta Y$, we define \[\ord(u) := \ord(\theta).\]  For a differential polynomial $f \in \K\{Y\} \setminus \K$, we define the order of $f$ to be the maximum order of all derivatives that appear in $f$.  For any finite set $A \subseteq \K\{Y\} \setminus \K$, we set
\begin{equation}\label{eq:HA}
\mathcal{H}(A) := \max\{\ord(f) : f \in A\}.
\end{equation}
For any $\theta = \partial_1^{i_1}\cdots \partial_m^{i_m}$ and positive integers $c_1,\dots,c_m \in \Z_{>0}$, we define the \emph{weight} of $\theta$ to be 
\[
w(\theta) = w\left(\partial_1^{i_1}\cdots\partial_m^{i_m}\right) := c_1i_1 + \cdots + c_mi_m.
\]
Note that if all of the $c_i = 1$, then $w(\theta) = \ord(\theta)$ for all $\theta \in \Theta$.  For a derivative $u = \theta y \in \Theta Y$, we define the weight of $u$ to be $w(u) := w(\theta)$.  For any differential polynomial $f \in \K\{Y\} \setminus \K$, we define the weight of $f$, $w(f)$, to be the maximum weight of all derivatives that appear in $f$.  For any finite set $A \subseteq \K\{Y\} \setminus \K$, we set
\[
\mathcal{W}(A) := \max\{w(f) : f \in A\}.
\]

\begin{definition}
A \emph{ranking} on the set $\Theta Y$ is a total order $<$ satisfying the following two additional properties:  for all $u,v \in \Theta Y$ and all $\theta \in \Theta$, $\theta\ne\id$,
\[
u < \theta u\quad\text{and}\quad
u < v\implies \theta u < \theta v.
\]
A ranking $<$ is called an \emph{orderly ranking} if for all $u,v \in \Theta Y$,
\[\ord(u) < \ord(v)\implies u < v.
\]
Given a weight $w$, a ranking $<$ on $\Theta Y$ is called a \emph{weighted ranking} if for all $u,v \in \Theta Y$,
\[w(u) < w(v)\implies u < v.\]
\end{definition}

\begin{remark}Note that if $w\left(\partial_1^{i_1}\cdots\partial_m^{i_m}\right) = i_1 + \cdots + i_m$ (that is, $w(\theta) = \ord(\theta)$), then a weighted ranking $<$ on $\Theta Y$ is in fact an orderly ranking.
\end{remark}
From now on, we fix a weighted ranking $<$ on $\Theta Y$.
\begin{definition} Let $f \in \K\{Y\} \setminus \K$.
\begin{itemize}
\item
 The derivative $u \in \Theta Y$ of highest rank appearing in $f$ is called the \emph{leader} of $f$, denoted $\lead(f)$.  
\item
If we write $f$ as a univariate polynomial in $\lead(f)$, the leading coefficient is called the \emph{initial} of $f$, denoted $\init(f)$.  
\item
If we apply any derivative $\delta \in \Delta$ to $f$, the leader of $\delta f$ is $\delta (\lead(f))$, and the initial of $\delta f$ is called the \emph{separant} of $f$, denoted $\sepp(f)$.
\end{itemize}
\end{definition}

Given a set $A \subseteq \K\{Y\} \setminus \K$, we will denote the set of leaders of $A$ by $\mathfrak{L}(A)$, the set of initials of $A$ by $I_A$, and the set of separants of $A$ by $S_A$; we then let $H_A = I_A \cup S_A$ be the set of initials and separants of $A$.  

For a derivative $u \in \Theta Y$, we let $(\Theta Y)_{<u}$ (respectively, $(\Theta Y)_{\leqslant u}$) be the collection of all derivatives $v \in \Theta Y$ with $v < u$ (respectively, $v \leqslant u$).  For any derivative $u \in \Theta Y$, we let $A_{< u}$ (respectively, $A_{\leqslant u}$) be the elements of $A$ with leader $< u$ (respectively, $\leqslant u$), that is,
\[
A_{< u} := A \cap \K[(\Theta Y)_{< u}] \quad \mbox{and} \quad A_{\leqslant u} := A \cap \K[(\Theta Y)_{\leqslant u}].
\]
We can similarly define $(\Theta A)_{< u}$ and $(\Theta A)_{\leqslant u}$, where 
\[
\Theta A := \{\theta f : \theta \in \Theta, f \in A\}.
\]
Given $f \in \K\{Y\} \setminus \K$ such that $\deg_{\lead(f)}(f) = d$, we define the \emph{rank} of $f$ to be 
\[
\rank(f) := \lead(f)^d.
\]
The weighted ranking $<$ on $\Theta Y$ determines a pre-order (that is, a relation satisfying all of the properties of an order, except for the property that $a \leqslant b$ and $b \leqslant a$  imply that $a=b$) on $\K\{Y\} \setminus \K$:  

\begin{definition}
Given $f_1, f_2 \in \K\{Y\} \setminus \K$, we say that 
\[
\rank(f_1) < \rank(f_2)
\]
if $\lead(f_1) < \lead(f_2)$ or if $\lead(f_1) = \lead(f_2)$ and $\deg_{\lead(f_1)}(f_1) < \deg_{\lead(f_2)}(f_2)$.
\end{definition}

\begin{definition}
A differential polynomial $f$ is \emph{partially reduced} with respect to another differential polynomial $g$ if no proper derivative of $\lead(g)$ appears in $f$, and $f$ is \emph{reduced} with respect to $g$ if, in addition, 
\[
\deg_{\lead(g)}(f) < \deg_{\lead(g)}(g).
\]
A differential polynomial is then (partially) reduced with respect to a set $A \subseteq \K\{Y\} \setminus \K$ if it is (partially) reduced with respect to every element of $A$.
\end{definition}

\begin{definition}
For a set $A \subseteq \K\{Y\}\setminus \K$,  we say that $A$ is:
\begin{itemize}
\item \emph{autoreduced} if every element of $A$ is reduced with respect to every other element.
\item \emph{weak d-triangular} if $\mathfrak{L}(A)$ is autoreduced.
\item \emph{d-triangular} if $A$ is weak d-triangular and every element of $A$ is partially reduced with respect to every other element.
\end{itemize}
\end{definition}

Note that every autoreduced set is d-triangular.  Every weak d-triangular set (and thus every d-triangular and autoreduced set) is finite \citep[Proposition~3.9]{Hubert}.  Since the set of leaders of a weak d-triangular set $A$ is autoreduced, distinct elements of $A$ must have distinct leaders.  If $u \in \Theta Y$ is the leader of some element of a weak d-triangular set $A$, we let $A_u$ denote this element.

\begin{definition}
We define a pre-order on the collection of all weak d-triangular sets, which  we also call \emph{rank}, as follows.  Given two weak d-triangular sets $A=\{A_1,\dots,A_r\}$ and $B = \{B_1,\dots,B_s\}$, in each case arranged in increasing rank, we say that $\rank(A) < \rank(B)$ if either:
\begin{itemize}
\item there exists a $k \leqslant \min(r,s)$ such that $\rank(A_i) = \rank(B_i)$ for all $1 \leqslant i < k$ and $\rank(A_k) < \rank(B_k)$, or
\item $r > s$ and $\rank(A_i)= \rank(B_i)$ for all $1 \leqslant i \leqslant s$.
\end{itemize}
We also say that $\rank(A) = \rank(B)$ if $r=s$ and $\rank(A_i) = \rank(B_i)$ for all $1 \leqslant i \leqslant r$.
\end{definition}

We can restrict this ranking to the collection of all d-triangular sets or the collection of all autoreduced sets. 

\begin{definition}
A \emph{characteristic set} of a differential ideal $I$ is an autoreduced set $C \subseteq I$ of minimal rank among all autoreduced subsets of $I$.
\end{definition}

Given a finite set $S \subseteq \K\{Y\}$, let $S^\infty$ denote the multiplicative set containing $1$ and generated by $S$.  For an ideal $I \subseteq \K\{Y\}$, we define the \emph{colon ideal} to be
\[
I : S^\infty := \{a \in \K\{Y\} : \exists s \in S^\infty \mbox{ with } sa \in I\}.
\]
If $I$ is a differential ideal, then $I:S^\infty$ is also a differential ideal  \citep[Section~I.2]{Kolchin}.

\begin{definition}
For a differential polynomial $f \in \K\{Y\}$ and a weak d-triangular set $A \subseteq \K\{Y\}$, a \emph{differential partial remainder} $f_1$ and a \emph{differential remainder} $f_2$ of $f$ with respect to $A$ are differential polynomials such that there exist $s \in S_A^\infty$, $h \in H_A^\infty$ such that $sf \equiv f_1 \mod [A]$ and $hf \equiv f_2 \mod [A]$, with $f_1$ partially reduced with respect to $A$ and $f_2$ reduced with respect to $A$. 
\end{definition}

We denote a differential partial remainder of $f$ with respect to $A$ by $\mbox{pd-red}(f,A)$ and a differential remainder of $f$ with respect to $A$ by $\mbox{d-red}(f,A)$.  There are algorithms to compute $\mbox{pd-red}(f,A)$ and $\mbox{d-red}(f,A)$ for any $f$ and $A$  \citep[Algorithms~3.12 and~3.13]{Hubert}.  These algorithms have the property that 
\[
\rank(\mbox{pd-red}(f,A)),\  \rank(\mbox{d-red}(f,A)) \leqslant \rank(f);
\] 
since we have a weighted ranking, this implies that 
\[
w(\mbox{pd-red}(f,A)),\  w(\mbox{d-red}(f,A))\leqslant w(f).
\]

\begin{definition}
Two derivatives $u,v \in \Theta Y$ are said to have a \emph{common derivative} if there exist $\phi, \psi \in \Theta$ such that $\phi u = \psi v$.
Note this is the case precisely when $u = \theta_1 y$ and $v = \theta_2 y$ for some $y \in Y$ and $\theta_1,\theta_2 \in \Theta$. 
\end{definition}

\begin{definition}
If $u = \partial_1^{i_1}\cdots \partial_m^{i_m}y$ and $v = \partial_1^{j_1}\cdots \partial_m^{j_m}y$ for some $y \in Y$, we define the \emph{least common derivative} of $u$ and $v$, denoted $\lcd(u,v)$, to be
\[
\lcd(u,v) = \partial_1^{\max(i_1,j_1)}\cdots \partial_m^{\max(i_m,j_m)}y.
\]
\end{definition}

\begin{definition}
For $f, g \in \K\{Y\}\setminus \K$,  we define the \emph{$\Delta$-polynomial} of $f$ and $g$, denoted $\Delta(f,g)$, as follows.  If $\lead(f)$ and $\lead(g)$ have no common derivatives, set $\Delta(f,g) = 0$.  Otherwise, let $\phi, \psi \in \Theta$ be such that \[\lcd(\lead(f),\lead(g)) = \phi(\lead(f)) = \psi(\lead(g)),\] and define
\[
\Delta(f,g) := \sepp(g)\phi(f) - \sepp(f)\psi(g).
\]
\end{definition}

\begin{definition}
A pair $(A,H)$ is called a \emph{regular differential system} if:
\begin{itemize}
\item $A$ is a d-triangular set
\item $H$ is a set of differential polynomials that are all partially reduced with respect to $A$
\item $S_A \subseteq H^\infty$
\item for all $f,g \in A$, 
$
\Delta(f,g) \in ((\Theta A)_{< u}):H^\infty
$,
where $u = \lcd(\lead(f),\lead(g))$.
\end{itemize}
\end{definition}

\begin{definition}
Any ideal of the form $[A]:H^\infty$, where $(A,H)$ is a regular differential system, is called a \emph{regular differential ideal}.
\end{definition}
Every regular differential ideal is a radical differential ideal \citep[Theorem~4.12]{Hubert}. 

\begin{definition}
Given a radical differential ideal $I \subseteq \K\{Y\}$, a \emph{regular decomposition} of $I$ is a finite collection of regular differential systems $\{(A_1,H_1),\dots,(A_r,H_r)\}$ such that 
\[
I = \bigcap_{i = 1}^r [A_i]:H_i^\infty.
\]
\end{definition}
Due to the Rosenfeld-Gr\"obner algorithm, every radical differential ideal in $\K\{Y\}$ has a regular decomposition.

\begin{definition}
A d-triangular set $C$ is called a \emph{differential regular chain} if it is a characteristic set of $[C]:H_C^\infty$; in this case, we call $[C]:H_C^\infty$ a \emph{characterizable differential ideal}.  
\end{definition}

\begin{definition}
A \emph{characteristic decomposition} of a radical differential ideal $I \subseteq \K\{Y\}$ is a representation of $I$ as an intersection of characterizable differential ideals.
\end{definition}

As we will recall in Section~\ref{sec:RGalg}, every radical differential ideal also has a characteristic decomposition.


\section{Rosenfeld-Gr\"obner algorithm}\label{sec:RGalg}

Below we reproduce the {\sf Rosenfeld-Gr\"obner} algorithm from \citep[Section~6]{Hubert}.  This algorithm relies on two others, called {\sf auto-partial-reduce} and {\sf update}, which we also include.  We include these two auxiliary algorithms because, in Section~\ref{sec:bound}, we will study their effect on the growth of the weights of derivatives in  {\sf Rosenfeld-Gr\"obner}.

{\sf Rosenfeld-Gr\"obner} takes as its input two finite subsets $F,K \in \K\{Y\}$ and outputs a finite set $\mathcal{A}$ of regular differential systems such that
\begin{equation}\label{regulardecomp}
\{F\}:K^\infty = \bigcap_{(A,H) \in \mathcal{A}} [A]:H^\infty,
\end{equation}
where $\mathcal{A} = \varnothing$ if $1 \in \{F\}:K^\infty$.

If we have a decomposition of $\{F\}:K^\infty$ as in \eqref{regulardecomp}, we can compute, using only algebraic operations, a decomposition of the form
\begin{equation}\label{chardecomp}
\{F\}:K^\infty = \bigcap_{C \in \mathcal{C}}[C]:H_C^\infty,
\end{equation}
where $\mathcal{C}$ is finite and each $C \in \mathcal{C}$ is a differential regular chain \citep[Algorithms~7.1 and~7.2]{Hubert}.  This means that an upper bound on $\bigcup_{(A,H) \in \mathcal{A}} \mathcal{W}(A \cup H)$ from \eqref{regulardecomp} will also be an upper bound on $\bigcup_{C \in \mathcal{C}} \mathcal{W}(C)$ from \eqref{chardecomp}.

{\sf Rosenfeld-Gr\"obner} has many immediate applications.  For example, if $K = \{1\}$, then $\{F\}:K^\infty = \{F\}$, so in this case, {\sf  Rosenfeld-Gr\"obner} computes a regular decomposition of $\{F\}$, which then also gives us a characteristic decomposition of $\{F\}$ by the discussion in the previous paragraph.

The \emph{weak differential Nullstellensatz} says that a system of polynomial differential equations $F = 0$ is consistent (that is, has a solution in some differential field extension of $\K$) if and only if $1 \notin [F]$  \citep[Section~IV.2]{Kolchin}.  Thus, since $\mbox{\sf Rosenfeld-Gr\"obner}(F,K) = \varnothing$ if and only if $1 \in \{F\}:K^\infty$, we see that $F=0$ is consistent if and only if $\mbox{\sf Rosenfeld-Gr\"obner}(F,\{1\}) \neq \varnothing$.

More generally, {\sf Rosenfeld-Gr\"obner} and its extension for computing a characteristic decomposition of a radical differential ideal allow us to test for membership in a radical differential ideal, as follows.  Suppose we have computed a characteristic decomposition
\[
\{F\} = \bigcap_{C \in \mathcal{C}} [C]:H_C^\infty.
\]
Now, a differential polynomial $f \in \K\{Y\}$ is contained in $\{F\}$ if and only if $f \in [C]:H_C^\infty$ for all $C \in \mathcal{C}$; this latter case is true if and only if $\mbox{d-red}(f,C) = 0$, which can be tested using \citep[Algorithm~3.13]{Hubert}.

{\sf Rosenfeld-Gr\"obner}, {\sf auto-partial-reduce}, and {\sf update} rely on the following tuples of differential polynomials:
\begin{definition}
A \emph{Rosenfeld-Gr\"obner quadruple} (or \emph{RG-quadruple}) is a 4-tuple $(G,D,A,H)$ of finite subsets of $\K\{Y\}$ such that:
\begin{itemize}
\item $A$ is a weak d-triangular set, $H_A \subseteq H$, $D$ is a set of $\Delta$-polynomials, and
\item for all $f,g \in A$, either $\Delta(f,g) = 0$ or $\Delta(f,g) \in D$ or 
\[
\Delta(f,g) \in \left(\Theta(A \cup G)_{<u}\right):H_u^\infty,\]
where $u = \lcd(\lead(f),\lead(g))$ and 
$
H_u = H_{A_{<u}} \cup (H \setminus H_A) \cap \K[(\Theta Y)_{<u}]
$.
\end{itemize}
\end{definition}

\begin{algorithm}
	\TitleOfAlgo{{\sf Rosenfeld-Gr\"obner}, \citep[Algorithm~6.11]{Hubert}}
    \label{alg:rosenfeld_groebner}
	\KwData{ $F$, $K$ finite subsets of $\K \{Y\}$ }
	\KwResult{
		A set $\mathcal{A}$ of regular differential systems such that:
        \begin{itemize}
			\item $\mathcal{A}$ is empty if it has been detected that $1 \in \{ F \}\colon K^{\infty}$
	    	\item $\{ F \} \colon K^{\infty} = \bigcap\limits_{(A, H) \in \mathcal{A}} [A] : H^{\infty}$ otherwise
        \end{itemize}
	}
    $\mathcal{S} := \{ (F, \varnothing, \varnothing, K) \} $\;
    $\mathcal{A} := \varnothing$\;
    \While{ $\mathcal{S} \neq \varnothing$ }{
    	$(G, D, A, H) := \mbox{an element of } \mathcal{S}$\;
        $\bar{\mathcal{S}} = \mathcal{S} \setminus (G, D, A, H)$\;
        \eIf{ $G\cup D = \varnothing$ }{
        	$\mathcal{A} := \mathcal{A} \cup \mbox{\sf auto-partial-reduce}(A, H)$\;
        }{
        	$p := \mbox{an element of } G\cup D$\;
            $\bar{G}, \bar{D} := G\setminus \{ p \}, D \setminus \{ p \}$\;
            $\bar{p} := \mbox{d-red}(p, A)$\;
            \eIf{ $\bar{p} = 0$ }{
            	$\bar{\mathcal{S}} := \bar{\mathcal{S}} \cup \{ (\bar{G}, \bar{D}, A, H) \}$\;
            }{\If{ $\bar{p} \notin \K$}{
            	$\bar{p}_i := \bar{p} - \init(\bar{p})\rank(\bar{p})$
                $\bar{p}_s := \deg_{\lead(\bar{p})}(\bar{p})\bar{p} - \lead(\bar{p})\sepp(\bar{p})$\;
                $\bar{\mathcal{S}} := \bar{\mathcal{S}} \cup \{ \text{\sf update}(\bar{G}, \bar{D}, A, H, \bar{p}), (G \cup \{ \bar{p}_s, \sepp(\bar{p}) \}, \bar{D}, A, H \cup \{ \init(\bar{p})\}), (\bar{G} \cup \{ \bar{p}_i, \init(\bar{p}) \}, \bar{D}, A, H) \}$\;
                }
            }
        }
        $\mathcal{S} := \bar{\mathcal{S}}$\;
    }
    \KwRet{$\mathcal{A}$}\;
\end{algorithm}

\begin{algorithm}
\vspace*{-.1in}
	\TitleOfAlgo{{\sf auto-partial-reduce}, \citep[Algorithm~6.8]{Hubert}}
    \label{alg:auto-pd}
    \KwData{Two finite subsets $A,H$ of $\K\{Y\}$ such that $(\varnothing, \varnothing, A,H)$ is an RG-quadruple}
    \KwResult{
    \begin{itemize}
    	\item The empty set if it is detected that $1 \in [A]:H^\infty$
        \item Otherwise, a set with a single regular differential system $(B,K)$ with $\mathfrak{L}(A) = \mathfrak{L}(B)$, $H_B \subseteq K$, and $[A]:H^\infty = [B]:K^\infty$
    \end{itemize}
    }
    $B:=\varnothing$\;
    \For{ $u \in \mathfrak{L}(A)$ increasingly }{
    	$b:= \mbox{pd-red}(A_u,B)$\;
        \eIf{ $\rank(b) = \rank(A_u)$ }{
        	$B:=B\cup\{b\}$\;
        }{
        	\KwRet{$(\varnothing)$}\;
        }
    }
    $K:=H_B \cup \{\mbox{pd-red}(p,B) : p \in H \setminus H_A\}$\;
    \eIf{ $0 \in K$ }{
    	\KwRet{$(\varnothing)$}\;
    }{
    	\KwRet{$\{(B,K)\}$}\;
    }\vspace*{-0.25in}
\end{algorithm}

\begin{algorithm}
\vspace*{-.1in}
	\TitleOfAlgo{{\sf update} \citep[Algorithm~6.10]{Hubert}}
    \label{alg:update}
	\KwData{
    \begin{itemize}
    	\item A $4$-tuple $(G, D, A, H)$ of finite subsets of $\K\{Y\}$
        \item A differential polynomial $p$ reduced with respect to $A$ such that $(G \cup \{p\},D,A,H)$ is an RG-quadruple
    \end{itemize}
    }
    \KwResult{A new RG-quadruple $(\bar{G}, \bar{D}, \bar{A}, \bar{H})$}
    $u := \mbox{lead}(p)$\;
    $G_A := \{ a \in A \mid \lead(a) \in \Theta u\}$\;
    $\bar{A} := A \setminus G_A$\;
    $\bar{G} := G \cup G_A$\;
    $\bar{D} := D \cup \{ \Delta(p, a) \mid a \in \bar{A}\} \setminus \{ 0\}$\;
    $\bar{H} := H \cup \{ \sepp(p), \init(p)\}$\;
    \KwRet{ $(\bar{G}, \bar{D}, \bar{A} \cup \{p\}, \bar{H})$ }\;
\end{algorithm}

\begin{remark}
The RG-quadruple that is output by {\sf update} satisfies additional properties that we do not list, as they are not important for our analysis.  For more information, we refer the reader to \citep[Algorithm~6.10]{Hubert}
\end{remark}


\section{Order upper bound}\label{sec:bound}

Given finite subsets $F,K \subseteq \K\{Y\}$, let $h = \mathcal{W}(F \cup K)$.  Our goal is to find an upper bound for
\[
\mathcal{W}\left(\bigcup_{(A,H) \in \mathcal{A}} (A \cup H)\right),
\]
where $\mathcal{A} = \mbox{\sf Rosenfeld-Gr\"obner}(F,K)$, in terms of $h$, $m$ (the number of derivations), and $n$ (the number of differential indeterminates).  By then choosing a specific weight, we can find an upper bound for $\mathcal{H}\left(\bigcup_{(A,H) \in \mathcal{A}}(A \cup H)\right)
$
in terms of $m$, $n$, and $\mathcal{H}(F \cup K)$.

We approach this problem as follows.  Every $(A,H) \in \mathcal{A}$ is formed by applying  {\sf auto-partial-reduce} to a $4$-tuple $(\varnothing, \varnothing, A',H') \in \mathcal{S}$.  Thus, it suffices:
\begin{itemize}
\item to bound how $\mbox{\sf auto-partial-reduce}$ increases the weight of a collection of differential polynomials (it turns out to not increase the weight), and
\item to bound $\mathcal{W}(G \cup D \cup A \cup H)$ for all $(G,D,A,H)$ added to $\mathcal{S}$ throughout the course of {\sf Rosenfeld-Gr\"obner}.
\end{itemize}
We accomplish the latter by determining when the weight of a tuple $(G,D,A,H)$ added to $\mathcal{S}$ is larger than the weights of the previous elements of $\mathcal{S}$ and bounding $\mathcal{W}(G \cup D \cup A \cup H)$ in this instance, and then bounding the number of times we can add such elements to $\mathcal{S}$. 

There is a sequence $\{ (G_i, D_i, A_i, H_i) \}_{i = 0}^N$ corresponding to each regular differential system $(A,H)$ in the output of {\sf Rosenfeld-Gr\"obner}, where $N = N_{(A,H)}$, such that $(G_{i + 1}, D_{i + 1}, A_{i + 1}, H_{i + 1})$ is obtained from $(G_i, D_i, A_i, H_i)$ during the while loop, $(G_0, D_0, A_0, H_0) = (F, \varnothing, \varnothing, K)$, and $(A,H) = \mbox{\sf auto-partial-reduce}(A_N, H_N)$.

We begin with an auxiliary result, which is an analogue of the \emph{first} property from \citep[Section~5.1]{GKOS}.
\begin{lemma}
\label{lem:reduced}
	For every $f \in A_i$ and $i < j$, there exists $g \in A_j$ such that $\lead(f) \in \Theta \lead(g)$.
       In particular, if $p$ is reduced with respect to $A_j$, then $p$ is  reduced with respect to $A_i$ for all $i < j$.
\end{lemma}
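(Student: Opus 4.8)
The plan is to track how the component $A$ of an RG-quadruple can change across a single iteration of the while loop, and then propagate the resulting relation from $A_i$ to $A_j$ by transitivity. The crucial observation is that, among the tuples produced in one pass of the loop, $A$ is left untouched in every branch except the one passing through \textsf{update}: in the $\bar p = 0$ branch and in the two branches adjoining $\bar p_i,\bar p_s$ and their initials/separants, the third coordinate is literally $A$ again. In \textsf{update}, writing $u = \lead(p)$ for the (reduced) polynomial $p$ being inserted, the returned set is $(A\setminus G_A)\cup\{p\}$, where $G_A = \{a\in A : \lead(a)\in\Theta u\}$. Thus every surviving element of $A$ is kept verbatim, and every deleted element has its leader in $\Theta u = \Theta\lead(p)$, i.e.\ it is ``covered'' by the newly inserted $p$.

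First I would prove the single-step version: for each transition $A_k \to A_{k+1}$ and each $f\in A_k$, there is $g\in A_{k+1}$ with $\lead(f)\in\Theta\lead(g)$, and moreover $\deg_{\lead(f)}(f)\geqslant \deg_{\lead(g)}(g)$ whenever $\lead(f)=\lead(g)$. If $A$ is unchanged, or if $f$ survives \textsf{update}, take $g=f$. If $f$ is deleted by \textsf{update}, take $g=p$: then $\lead(f)\in\Theta u=\Theta\lead(p)$, and in the boundary case $\lead(f)=\lead(p)$ the reducedness of $p$ with respect to $A_k\ni f$ forces $\deg_{\lead(p)}(p)<\deg_{\lead(f)}(f)$, which gives the required degree inequality (uniqueness of leaders in a weak d-triangular set guarantees $g=p$ is the only candidate with leader $u$).

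Next I would pass to arbitrary $i<j$ by induction on $j-i$, using that the relation ``$\lead(f)\in\Theta\lead(g)$, with $\deg_{\lead(f)}(f)\geqslant\deg_{\lead(g)}(g)$ when the leaders agree'' is transitive. For transitivity I write $\lead(f)=\theta_1\lead(g)$ and $\lead(g)=\theta_2\lead(g')$, so $\lead(f)=\theta_1\theta_2\lead(g')$; since operators in $\Theta$ have nonnegative exponents, $\theta_1\theta_2=\id$ forces $\theta_1=\theta_2=\id$, so either the composite is a proper derivative (and no degree condition is needed) or all operators are $\id$ and the two degree inequalities chain together. Composing the single-step claim along the path $i,i+1,\dots,j$ then yields, for each $f\in A_i$, an element $g\in A_j$ with $\lead(f)\in\Theta\lead(g)$ together with the accompanying degree inequality; this is the first assertion of the lemma.

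Finally, the ``in particular'' clause follows from this covering relation. Given $p$ reduced with respect to $A_j$ and $f\in A_i$, pick $g\in A_j$ as above. If $\lead(f)$ is a \emph{proper} derivative of $\lead(g)$, then $\lead(f)$ and all its proper derivatives are proper derivatives of $\lead(g)$, hence none appear in $p$, so $\deg_{\lead(f)}(p)=0<\deg_{\lead(f)}(f)$ and $p$ is reduced with respect to $f$. If $\lead(f)=\lead(g)$, then $\deg_{\lead(f)}(p)=\deg_{\lead(g)}(p)<\deg_{\lead(g)}(g)\leqslant\deg_{\lead(f)}(f)$, and again $p$ is reduced with respect to $f$; as $f\in A_i$ was arbitrary, $p$ is reduced with respect to $A_i$. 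I expect the equal-leader case to be the main obstacle: the bare containment $\lead(f)\in\Theta\lead(g)$ does not by itself transfer reducedness when the leaders coincide, which is exactly why I carry the degree inequality through both the single-step claim and the induction.
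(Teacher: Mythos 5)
Your proof is correct and follows essentially the same route as the paper's: reduce to a single pass of the while loop, observe that the third coordinate changes only through \textsf{update}, keep surviving elements verbatim, and cover each deleted element of $G_A$ by the newly inserted $p$. The degree inequality you carry along to handle the equal-leader case of the ``in particular'' clause is a detail the paper's proof leaves implicit, but it is necessary bookkeeping within the same argument rather than a different approach.
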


\begin{proof}
	It is sufficient to consider the case $j = i + 1$.
    If $(G_{i + 1}, D_{i + 1}, A_{i + 1}, H_{i + 1})$ was obtained from $(G_{i}, D_{i}, A_{i}, H_{i})$ without applying {\sf update}, then $A_i = A_{i + 1}$.
    Otherwise, either $f \in A_i \setminus G_{A_i}$ (we use the notation from {\sf update}), or $f \in G_{A_i}$.
    In the former case, $f \in A_{i + 1}$ as well, so we can set $g = f$.
    In the latter case, $\lead(f) \in \Theta\lead(p)$, so we can set $g = p$.
\end{proof}

We define a partial order $\preccurlyeq$ on the set of derivatives $\Theta Y$ as follows.  For $u, v \in \Theta Y$, we say that $u \preccurlyeq v$ if there exists $\theta \in \Theta$ such that $\theta u = v$.  Note that this implies that $u$ and $v$ are both derivatives of the same $y \in Y$.  

\begin{definition}
An \emph{antichain sequence} in $\Theta Y$ is a sequence of elements $S = \{s_1,s_2,\dots\} \subseteq \Theta Y$ that are pairwise incomparable in this partial order.   
\end{definition}

Given a sequence $\{ (G_i, D_i, A_i, H_i) \}_{i = 0}^N$ as above (where $N = N_{(A,H)}$ for some regular differential system $(A,H)$ in the output of {\sf  Rosenfeld-Gr\"obner}), we will construct an antichain sequence $S = \{ s_1, s_2, \ldots \} \subseteq \Theta Y$ inductively going along the sequence $\{ (G_i, D_i, A_i, H_i) \}$.
Suppose $S_{j-1} = \{s_1,\dots,s_{j-1}\}$ has been constructed after considering $(G_0, D_0, A_0, H_0), \ldots, (G_{i-1}, D_{i-1}, A_{i-1}, H_{i-1})$, where $S_0 = \varnothing$.  A $4$-tuple $(G_i, D_i, A_i, H_i)$ can be obtained from the tuple $(G_{i - 1}, D_{i - 1}, A_{i - 1}, H_{i - 1})$ in two ways:
\begin{enumerate}

    \item We did not perform {\sf update}.
    In this case, we do not append a new element to $S$.
    
    \item We performed {\sf update} with respect to a differential polynomial $\bar{p}$.
    If there exists $s_k \in S_{j-1}$ such that $\lead(\bar{p}) \leqslant s_k$, we do not append a new element to $S_{j-1}$.
    Otherwise, let $s_j = \lead(\bar{p})$ and define $S_j = \{s_1,\dots,s_j\}$.
    In the latter case, we set $k_j = i$. We also set $k_0 = 0$.
\end{enumerate}

\begin{theorem}\label{antichainbound}
    The sequence $\{s_j\}$ is an antichain sequence in $\Theta Y$ and, for all $j \geqslant 1$,
  \[
      w(s_j) \leqslant h f_{j},\] where $\{ f_j \}$ is the Fibonacci sequence.
\end{theorem}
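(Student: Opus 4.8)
I would establish the antichain property first. By construction a new element $s_j=\lead(\bar p)$ is appended only when $\lead(\bar p)$ exceeds, in the ranking, every $s_k$ chosen so far, so $s_1<s_2<\cdots$ strictly; since a proper relation $u\preccurlyeq v$ with $u\neq v$ forces $u<v$, this already yields $s_j\not\preccurlyeq s_i$ for $i<j$. For the reverse I would use Lemma~\ref{lem:reduced}: the polynomial $\bar p^{(j)}$ carrying $s_j=\lead(\bar p^{(j)})$ is reduced with respect to $A_{k_j-1}$, while the element carrying $s_i$ lies in $A_{k_i}$ with $k_i<k_j$, so Lemma~\ref{lem:reduced} gives $g\in A_{k_j-1}$ with $\lead(g)\preccurlyeq s_i$. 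Were $s_i\preccurlyeq s_j$ with $s_i\neq s_j$, then $\lead(g)\preccurlyeq s_i\prec s_j$, making $s_j$ a proper derivative of $\lead(g)$ and contradicting reducedness, since $s_j$ occurs in $\bar p^{(j)}$. Thus the $s_j$ are pairwise incomparable.

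For the weight bound I would induct on $j$ with hypothesis $w(s_j)\le hf_j$. The organizing principle is that every operation of the three algorithms is weight-non-increasing except the formation of $\Delta$-polynomials: pd-red and d-red do not raise weight (Section~\ref{sec:RGalg}), and each of $\bar p_i,\bar p_s,\init(\bar p),\sepp(\bar p)$ has weight at most $w(\bar p)$ because $w$ is the maximal weight of an occurring derivative and hence insensitive to products. So $w(s_j)=w(\lead\bar p)\le w(\bar p)\le w(p)$ for the $p\in G\cup D$ processed at step $k_j$, and I would trace $p$ to the event producing its weight. If that event is an element of $F$---reached through splittings and \textsf{update}'s $G_A$-moves, which only lower weight or reinstate earlier leaders---then $w(s_j)\le h=hf_1\le hf_j$. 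Otherwise $p$ descends from a $\Delta$-polynomial $\Delta(f,g)$, and since $w(\sepp(g)\,\phi(f))=w(\phi(f))=w(\phi)+w(\lead f)=w(\lcd(\lead f,\lead g))$ and likewise for the other summand, one gets $w(s_j)\le w(\Delta(f,g))\le w(\lcd(\lead f,\lead g))\le w(\lead f)+w(\lead g)$, using $\max(i_\ell,j_\ell)\le i_\ell+j_\ell$ coordinatewise in the last step.

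It remains to bound $w(\lead f)$ and $w(\lead g)$ by earlier antichain weights. At the moment $\Delta(f,g)$ is formed, $f$ has just entered the d-triangular set and $g$ was already present, so each of $\lead f,\lead g$ is $\le$ (in the ranking) some antichain element chosen before step $k_j$; by the weighted-ranking property this gives $w(\lead f)\le w(s_a)$ and $w(\lead g)\le w(s_b)$ with $a,b<j$. If $a\neq b$ I may assume $a\le j-1$, $b\le j-2$, and the induction hypothesis with monotonicity of $\{f_j\}$ yields $w(s_j)\le hf_{j-1}+hf_{j-2}=hf_j$. If $a=b$ but $a\le j-2$, then $w(s_j)\le 2w(s_a)\le 2hf_{j-2}\le h(f_{j-1}+f_{j-2})=hf_j$, again using $f_{j-2}\le f_{j-1}$. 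Thus the only configuration not yet controlled is that both $\lead f$ and $\lead g$ lie in the top ranking-gap $(s_{j-2},s_{j-1}]$ (for $j=2$, the gap below $s_1$), which would also be the sole obstruction to the base cases $j=1,2$.

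The hard part---and the step I expect to demand the most care---is ruling out this last configuration. The two leaders are $\preccurlyeq$-incomparable, because $g$ survives in $\bar A=A\setminus G_A$ while $\bar p=f$ is reduced with respect to $A$; but incomparability alone does not keep two leaders out of a single ranking-gap. The plan is to prove a second structural property accompanying Lemma~\ref{lem:reduced}---roughly, that each live leader can be charged, \emph{consistently along} $\{(G_i,D_i,A_i,H_i)\}$, to a distinct antichain element, so that two $\preccurlyeq$-incomparable leaders whose $\Delta$-polynomial actually yields a new maximum cannot both be charged to $s_{j-1}$. I would attempt this by showing that if both leaders were dominated by $s_{j-1}$ but by no earlier $s_k$, then reducedness of the differential remainder with respect to the current d-triangular set would force its leader strictly below $\lcd(\lead f,\lead g)$ in the coordinates where the two leaders differ, collapsing its weight into the already-controlled range and preventing it from being a new maximum. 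Converting this collapse into a clean quantitative statement is the crux of the argument.
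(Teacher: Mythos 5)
Your antichain argument is correct and is essentially the paper's: you transport $s_i$ forward to a divisor $\lead(g)$ with $g\in A_{k_j-1}$ via Lemma~\ref{lem:reduced} and contradict reducedness of the polynomial carrying $s_j$, while the paper transports reducedness backward to $A_{k_i}$; these are the same argument.

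The weight bound, however, has a genuine gap, and it is precisely the one you flag yourself: the configuration in which both members of the $\Delta$-pair have leaders dominated in the ranking only by $s_{j-1}$ is never ruled out, and your final paragraph is a plan rather than a proof. The root cause is that you induct on $j$ alone and therefore split on the wrong dichotomy. Bounding $w(\lead f)$ and $w(\lead g)$ by $w(s_a)$ and $w(s_b)$ for the least dominating antichain elements discards exactly the information needed, because ``$\lead(g)$ is dominated in the ranking only by $s_{j-1}$'' does not prevent $w(g)$ from being much smaller. The paper instead runs a simultaneous induction along the entire sequence $\{(G_i,D_i,A_i,H_i)\}_{i=0}^N$ with three invariants, where $j=\antik_i$: (a) $\mathcal{W}\bigl(\bigcup_{t\leqslant i}(G_t\cup D_t\cup H_t)\bigr)\leqslant hf_{j+1}$; (b) $\mathcal{W}\bigl(\bigcup_{t\leqslant i}A_t\bigr)\leqslant hf_{j}$; and (c) the weights of $\lcd$'s of leaders of distinct elements of $\bigcup_{t\leqslant i}A_t$ are at most $hf_{j+1}$. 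The deliberate asymmetry between (a) and (b) --- the $A$-part lags one Fibonacci step behind the $G\cup D\cup H$-part --- is the missing idea. Each call to {\sf update} pairs the single new polynomial $p$ (of weight at most $hf_{j+1}$ by (a), being a reduction of an element of $G_i\cup D_i$) only with polynomials $g$ already present in $A_i$, and every such $g$ has weight at most $hf_{j}$ by (b), regardless of where $\lead(g)$ sits in the ranking relative to $s_{j-1}$ or $s_j$; hence $w(\lcd(\lead(g),\lead(p)))\leqslant hf_j+hf_{j+1}=hf_{j+2}$, which restores (a) and (c) at level $j+1$. Updates that do not extend the antichain insert into $A$ only polynomials of weight at most $hf_j$ and produce $\Delta$-polynomials controlled by (c), which is why (b) stays one step behind. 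Your two summands are therefore automatically one index apart because one polynomial is new and one is old at the moment the $\Delta$-polynomial is formed --- a fact about when they entered $A$, not about which $s_a$ dominates their leaders --- and the case you could not handle never has to be excluded.
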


	For $m = 2$, we provide a refined version of Theorem~\ref{antichainbound}.
    Let $\{f(n, h)_k\}$ be the sequence:
    \begin{equation}\label{eq:m2_bound}
    \begin{cases} f(n,h)_0=0,\ f(n, h)_1 = f(n, h)_2 = h \\ f(n, h)_{k} = f(n, h)_{k - 1} + f(n, h)_{k - 2} & \text{for }k \leqslant n + 1 \\ f(n, h)_{k} = f(n, h)_{k - 1} + f(n, h)_{k - 2} - 1 & \text{for } k > n + 1. \end{cases}
    \end{equation}
    
    \begin{proposition}\label{prop:m2_refined}
    	For $m = 2$ the sequence $\{ s_j\}$ satisfies, for all $j\geqslant 1$,
        \[
        w(s_j) \leqslant f(n, h)_j.
        \]
    \end{proposition}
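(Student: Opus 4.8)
The plan is to re-run the induction underlying Theorem~\ref{antichainbound}, changing only the one inequality that produces the recursion. Recall from that argument that each appended element is $s_j = \lead(\bar p)$ with $\bar p = \mbox{d-red}(p, A_{k_j - 1})$ for some $p \in G_{k_j - 1} \cup D_{k_j - 1}$, and that tracing $p$ backwards through the weight-non-increasing operations of {\sf Rosenfeld-Gr\"obner} and {\sf update} leaves two cases. Either $p$ descends from an element of $F \cup K$, so $w(s_j) \leqslant w(\bar p) \leqslant h$; or $p$ is a $\Delta$-polynomial $\Delta(f, g)$, in which case $\lead(f)$ and $\lead(g)$ must be derivatives of a common $y \in Y$ (otherwise $\Delta(f,g) = 0$), and there are distinct indices $a, b < j$ with $w(\lead(f)) \leqslant w(s_a)$ and $w(\lead(g)) \leqslant w(s_b)$. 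The Fibonacci bound comes from $w(s_j) \leqslant w(\Delta(f,g)) \leqslant w(\lcd(\lead(f), \lead(g))) \leqslant w(\lead(f)) + w(\lead(g))$.

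Now specialize to $m = 2$ and write $u = \partial_1^{u_1}\partial_2^{u_2} y$, $v = \partial_1^{v_1}\partial_2^{v_2} y$ for the two leaders. Two sources of saving are available. First, the planar identity
\[
w(\lcd(u, v)) = w(u) + w(v) - \bigl(c_1\min(u_1, v_1) + c_2\min(u_2, v_2)\bigr)
\]
and $c_1, c_2 \geqslant 1$ show that the defect is at least $1$ unless $u$ and $v$ lie on two distinct coordinate axes of the plane of derivatives of $y$ (one with first exponent $0$, the other with second exponent $0$). Second, by the definition of $\Delta(f,g) = \sepp(g)\phi(f) - \sepp(f)\psi(g)$ the two summands share the leading term $\sepp(f)\sepp(g)\,\lcd(\lead(f), \lead(g))$, which cancels, so $\lead(\Delta(f,g)) < \lcd(\lead(f), \lead(g))$ in the ranking; for $m = 2$ one can extract from this cancellation a further unit of weight saving in part of the residual axis configuration.

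The heart of the proof -- and the step I expect to be the main obstacle -- is to show that, combining the two mechanisms, at least one unit of weight is saved for every $j > n + 1$. The natural tool is the staircase structure of antichains in $\ZN^2$: grouping $s_1, s_2, \ldots$ by indeterminate, the derivatives of a fixed $y$ form an antichain in $\ZN^2$ under $\preccurlyeq$, hence a staircase carrying at most one element on each coordinate axis. Consequently the configuration in which the first (overlap) mechanism fails can be charged to an individual indeterminate, and a pigeonhole over the $n$ indeterminates -- together with the two base elements $s_1, s_2$ of weight $\leqslant h$ and the cancellation mechanism handling the leftover axis cases -- should confine all saving-free indices to $j \leqslant n + 1$. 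The delicate point is to match the generating leaders $\lead(f), \lead(g)$ to antichain indices tightly enough that this count lands exactly on the threshold $n + 1$ rather than on a larger multiple of $n$.

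Granting this, the proposition follows by induction on $j$. The cases $j = 1, 2$ give $w(s_j) \leqslant h = f(n,h)_1 = f(n,h)_2$. For $j \geqslant 3$ we have, using $a \neq b$, $a, b \leqslant j - 1$ and the monotonicity of $\{f(n,h)_j\}$,
\[
w(s_j) \leqslant w(s_a) + w(s_b) - \varepsilon \leqslant f(n,h)_{j-1} + f(n,h)_{j-2} - \varepsilon,
\]
where $\varepsilon = 0$ for $3 \leqslant j \leqslant n + 1$ and $\varepsilon = 1$ for $j > n + 1$; by \eqref{eq:m2_bound} the right-hand side equals $f(n,h)_j$, completing the induction.
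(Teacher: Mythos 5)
Your outline follows the paper's strategy up to the decisive step, and your ``first mechanism'' is exactly the one the paper uses: for $m=2$ one has $w(\lcd(u,v)) = w(u)+w(v) - \bigl(c_1\min(u_1,v_1)+c_2\min(u_2,v_2)\bigr)$, and the defect vanishes only when the two leaders are pure powers $\partial_1^a y_r$ and $\partial_2^b y_r$ of the same indeterminate, a configuration that occurs at most once per $r$ because an antichain in $\ZN^2$ meets each coordinate axis at most once. The genuine gap is in your closing induction, which assumes that the saving-free (``jump'') steps are located exactly at $j = 3, \ldots, n+1$, i.e.\ that a unit is saved at \emph{every} $j > n+1$. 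That is not true: the axis-pair event for a given indeterminate can occur at an arbitrarily late index, so a run of the algorithm can produce savings at $j = 3, 4, \ldots$ and then a no-saving step at some $j \gg n+1$. You flag this yourself as the ``delicate point,'' and it is precisely where the argument breaks: the recursion with $\varepsilon$ determined solely by whether $j \leqslant n+1$ does not describe the actual sequence of weights.

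The paper closes this gap differently. It does not attempt to locate the jump indices; it only shows there are at most $n-1$ of them (the $n$-th axis-pair completion forces the antichain to terminate, which is what puts the threshold at $n+1$ rather than $n+2$ --- the worry you raise about landing on ``a larger multiple of $n$''), and then proves a separate extremal lemma: among all sequences satisfying $F_r = F_{r-1}+F_{r-2}-1$ with at most $n-1$ exceptional indices where $F_r = F_{r-1}+F_{r-2}$, the one with all exceptions pushed to the earliest positions $3,\ldots,n+1$, namely $\{f(n,h)_j\}$, dominates every other. The proof is a local exchange: moving a single jump index one step earlier never decreases any term. This rearrangement argument is the missing ingredient; without it (or a substitute) the bound $w(s_j) \leqslant f(n,h)_j$ does not follow from the savings you establish. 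Two smaller points: your second (``cancellation'') mechanism is not used in the paper and does not obviously yield a full unit of weight, since $\lead(\Delta(f,g)) < \lcd(\lead(f),\lead(g))$ in a weighted ranking only gives $w \leqslant w(\lcd(\lead(f),\lead(g)))$, not a strict drop by $1$; and the paper bounds $w(\lead(g))$ and $w(p)$ by the running maxima $F_j$ and $F_{j+1}$ of $\mathcal{W}\bigl(\bigcup A_t\bigr)$ and $\mathcal{W}\bigl(\bigcup(G_t\cup D_t\cup H_t)\bigr)$, not by two distinct earlier antichain elements $s_a, s_b$; it is this bookkeeping (carried as a three-part induction hypothesis alongside Theorem~\ref{antichainbound}) that legitimizes the two-term recursion.
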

    
    We will prove Proposition~\ref{prop:m2_refined} while proving Theorem~\ref{antichainbound}, highlighting the case $m = 2$.

\begin{proof}
	Let $i < j$.
    Assume that $s_j \succcurlyeq s_i$.
	Then, $p$ is not reduced with respect to $A_{k_i}$, which contradicts Lemma~\ref{lem:reduced}.
    On the other hand, the case $s_j \preccurlyeq s_i$ is impossible by the construction of the sequence, so $\{s_j\}$ is an antichain sequence.

	Let $L$  denote the length of the sequence $\{ s_j\}$.
	We denote  the maximal $j \in \ZN$ such that $k_j \leqslant i$ by $\antik_i$.
    For all $i \geqslant 0$, let us set $j = \antik_i$ and prove by induction on $i$ that
    \begin{enumerate}

    \item $\mathcal{W}\left( \bigcup\limits_{t = 0}^i (G_t \cup D_t \cup H_t) \right) \leqslant h f_{j + 1}$;
    \item $\mathcal{W}\left( \bigcup\limits_{t = 0}^i A_t \right) \leqslant h f_{j}$;
    \item For all distinct elements of $\bigcup\limits_{t = 0}^i A_t$, the weights of the least common derivatives of their leaders do not exceed $h f_{j + 1}$.
    \end{enumerate}
    
    	If $m = 2$, let $F_0 = 0$, $F_1 = F_2 = h$. We will show that there exists a sequence $\{F_r\}$ such that
        \begin{itemize}
        \item for all $r \geqslant 1$, $w(s_r)\leqslant F_r$ and
        \item $F_{r} = F_{r - 1} + F_{r - 2} - 1$ for all $r\geqslant 3$ except at most $n - 1$ of them, for which $F_{r} = F_{r - 1} + F_{r - 2}$. 
        In the latter case, we will say that $r$ is \emph{a jump index}.
        Note that~$2$ is not a jump index by the definition, although $F_2 = F_1 + F_0$.
        \end{itemize} For each such sequence, the induction hypothesis will be the following:
        \begin{enumerate}
		    \item\label{l:1} $\mathcal{W}\left( \bigcup\limits_{t = 0}^i (G_t \cup D_t \cup H_t) \right) \leqslant F_{j + 1}$ for $j < L$ and $\mathcal{W}\left( \bigcup\limits_{t = 0}^i (G_t \cup D_t \cup H_t) \right) \leqslant F_{L + 1} + 1$ for $j = L$;
		    \item $\mathcal{W}\left( \bigcup\limits_{t = 0}^i A_t \right) \leqslant F_{j}$;
		    \item\label{l:3} For all distinct elements of $\bigcup\limits_{t = 0}^i A_t$, the weights of the least common derivatives of their leaders do not exceed $F_{j + 1}$ for $j < L$ and $F_{L + 1} + 1$ for $j = L$;
            \item If, in either of~\eqref{l:1} or~\eqref{l:3}, the equality holds in the case $j = L$, then, for every $q$, $1 \leqslant q \leqslant n$, the sequence $\{ s_r\}$ contains $\partial_1^{a_q}y_q$ and $\partial_2^{b_q}y_q$ for some $a_q$ and $b_q$.
	    \end{enumerate}
    
    In the base case $i = 0 = k_0$, we have \[\mathcal{W}(G_0 \cup D_0 \cup H_0) = h = h f_1 \; (F_1\text{ in the case m = 2})\] and \[\mathcal{W}(A_0) = \mathcal{W}(\varnothing) = 0 = h f_0 \; (F_0\text{ in the case m = 2}).\]
    There are two distinct cases for $i + 1$:
    \begin{enumerate}
		\item \underline{Case $i + 1 < k_{j + 1}$} (so $\antik_{i+1} = j$). Then, $(G_{i + 1}, D_{i + 1}, A_{i + 1}, H_{i + 1})$ was obtained from $(G_{i}, D_{i}, A_{i}, H_{i})$ in one of the following ways:
        \begin{enumerate}
        	\item We did not perform {\sf update}.
            In this case, $A_{i + 1} = A_i$ and \[\mathcal{W}(G_{i + 1} \cup D_{i + 1} \cup H_{i + 1}) \leqslant \mathcal{W}(G_{i} \cup D_{i} \cup H_{i}).\]
            
            \item We performed {\sf update} with respect to a differential polynomial $p$ such that $\lead(f) \in \Theta\lead(p)$ for some $f \in \bigcup_{t = 0}^i A_t$.
            In this case, \[\mathcal{W}(A_{i + 1}) \leqslant \mathcal{W}\left( \bigcup_{t = 0}^i A_t \right).\]
            Then, for all $g \in A_t$ ($t \leqslant i$),
            \[w(\Delta(p,g))\leqslant w(\lcd(\lead(g),\lead(f))),\] which is bounded by $h f_{j + 1}$ (by $F_{j + 1}$ or $F_{L + 1} + 1$ in the case $m = 2$) due to the third inductive hypothesis.
			Since $D_{i + 1} \setminus D_i$ consists of some of these polynomials, $G_{i + 1} \setminus G_i \subseteq A_{i}$, and $H_{i + 1} \setminus H_i = \{ \sepp(p), \init(p) \}$, then \[\mathcal{W}(G_{i + 1} \cup D_{i + 1} \cup H_{i + 1}) \leqslant \mathcal{W}(G_i \cup D_i \cup H_i).\]
        \end{enumerate}
        
        \item \underline{Case $i + 1 = k_{j + 1}$} (so now $\antik_{i + 1} = j + 1$).
        We performed {\sf update} with respect to a differential polynomial $p$, which is a result of reduction of some $\tilde{p} \in G_{i} \cup D_i$ with respect to $A_i$.
        Then \[\mathcal{W}(A_{i + 1}) \leqslant \max(\mathcal{W}(A_{i}), w(p)) \leqslant h f_{j + 1}.\]
        Moreover, for every $g \in \bigcup\limits_{t = 0}^i A_t$,
        \begin{equation}\label{eq:lcd}
        w(\lcd(\lead(g), \lead(p)))\leqslant hf_{j} + hf_{j + 1} = h f_{j + 2}.
        \end{equation}
        Since $D_{i + 1} \setminus D_i$ consists of some of these polynomials, $G_{i + 1} \setminus G_i \subseteq A_{i}$, and $H_{i + 1} \setminus H_i = \{ \sepp(p), \init(p) \}$, we have 
        \begin{align*}\mathcal{W}(G_{i + 1} \cup& D_{i + 1} \cup H_{i + 1})\leqslant \max( \mathcal{W}(G_i \cup D_i \cup H_i), h f_{j + 2} ) = h f_{j + 2}.\end{align*}
        
        	In the case $m = 2$, instead of~\eqref{eq:lcd}, we obtain
            \begin{equation}\label{eq:lcd_m2}
            w(\lcd(\lead(g), \lead (p))) \leqslant w(\lead (p)) + w(\lead(g))
            \end{equation}
            If~\eqref{eq:lcd_m2} is strict, we have 
            \[
            w(\lcd(\lead(g), \lead (p))) \leqslant w(\lead(p)) + w(\lead(g)) - 1 \leqslant F_j + F_{j + 1} - 1 = F_{j + 2},
            \] 
            and $j + 2$ is not a jump index.
            Otherwise, \eqref{eq:lcd_m2} turns out to be an equality.
            In this case, the only possibility is $\lead(p) = \partial_1^a y_r$ and $\lead(g) = \partial_2^b y_r$ (or vice versa) for some $r$.
            Note that, for every $r$, such a situation occurs at most once.
            Consider the following two cases:
            \begin{enumerate}
	            \item For every $q$, $1 \leqslant q \leqslant n$, the sequence $s_1, \ldots, s_{j + 1}$ already contains $\partial_1^{a_q}y_q$ and $\partial_{2}^{b_q}y_q$ for some $a_q$ and $b_q$.
                In this case, $s_1, \ldots, s_{j + 1}$ already form an antichain sequence that cannot be extended further, so $j + 1 = L$.
                We set $F_{L + 1} = F_L + F_{L - 1} - 1$, so we can bound the right-hand side of~\eqref{eq:lcd_m2} from above by $F_{L + 1} + 1$.
                
                \item Otherwise, we just set $F_{j + 2} = F_{j + 1} + F_{j}$, so $j + 2$ is a jump index, and we still have less than $n$ of them.
            \end{enumerate}
        
	\end{enumerate}
    Since $w(s_j) \leqslant \mathcal{W}(A_{k_j}) \leqslant h f_{j}$, this completes the proof of Theorem~\ref{antichainbound}.

    	In order to complete the proof of Proposition~\ref{prop:m2_refined}, it is sufficient to show that, for every such sequence $\{F_j\}$, for all $j$, $f(n, h)_j \geqslant F_j$.
        Let $i_1, \ldots, i_{n - 1}$  denote the jump indices of $\{F_j\}$.
        Note that $\{f(n, h)_j\}$ is uniquely defined as a sequence of the same type as $\{F_j\}$ with jump indices $3, \ldots, n + 1$.
        It is sufficient to prove that, after decreasing any jump index of $\{F_j\}$ by one, we obtain a sequence which is not smaller than $\{ F_j\}$.
       Then, since we will obtain $\{f(n,h)_j\}$ after some number of such operations and the jump indices of $\{f(n,h)_j\}$ cannot be further decreased, we will have that $\{f(n,h)_j\}$ is the largest such sequence.
        The claim is true since, before decreasing $i_j$, the sequence was of the form 
        \[\ldots , F_{i_j - 2},\quad F_{i_j - 1} = F_{i_j - 3} + F_{i_j - 2} - 1,\quad F_{i_j} = F_{i_j - 1} + F_{i_j - 2} = F_{i_j - 3} + 2F_{i_j - 2} - 1, \ldots\]
        but, after decreasing $i_j$ by one, it will be of the form
        \[
        \ldots, F_{i_j - 2},\quad F_{i_j - 1} = F_{i_j - 3} + F_{i_j - 2},\quad F_{i_j} = F_{i_j - 1} + F_{i_j - 2} - 1 = F_{i_j - 3} + 2F_{i_j - 2} - 1, \ldots
        \]
        Since the rest of terms obey the same recurrence for both sequences, the latter is not smaller than the former.
\end{proof}
Let $\mathfrak{n}=\{ 1, \ldots, n\}$.  
Define the \emph{degree} of an element $((i_1,\dots,i_m),k) \in \ZN^m \times \mathfrak{n}$ to be $i_1 + \cdots + i_m$.  Given a weight $w\left(\partial_1^{i_1}\cdots\partial_m^{i_m}\right) = c_1i_1 + \cdots + c_mi_m$ on $\Theta$, define a map from the set of derivatives $\Theta Y$ to the set $\ZN^m \times \mathfrak{n}$ by
\[
\partial_1^{i_1}\cdots \partial_m^{i_m}y_k \mapsto ((c_1i_1,\dots,c_mi_m),k).
\]
Note the degree of the image of $\theta y$ in $\ZN \times \mathfrak{n}$ is equal to the weight of $\theta y$ in $\Theta Y$.

Under this map, the partial order $\preccurlyeq$ on $\Theta Y$ determines a partial order $\preccurlyeq$ on $\ZN^m \times \mathfrak{n}$ by saying \[((i_1,\dots,i_m),k) \preccurlyeq ((j_1,\dots,j_m),l) \iff k = l\ \text{and}\ i_r \leqslant j_r \ \text{for all}\ r,\ 1 \leqslant r \leqslant m.\]  Thus, every antichain sequence of $\Theta Y$ determines an antichain sequence of $\ZN^m\times \mathfrak{n}$.  Every antichain sequence of $\ZN^m \times \mathfrak{n}$ (and thus of $\Theta Y$) is finite \citep[Lemma~4.4]{Pierce}.

Given an increasing function $f:\Z_{>0} \to \ZN$, we say that $f$ \emph{bounds the degree growth} of an antichain sequence $S = \{s_1,\dots,s_k\}\subseteq \ZN^m \times \mathfrak{n}$ if $\deg(s_i) \leqslant f(i)$ for all $1 \leqslant i \leqslant k$.  By \citep[Lemma~4.9]{Pierce}, there is an upper bound on the length of an antichain sequence of $\ZN^m \times \mathfrak{n}$ with degree growth bounded by $f$, and this bound depends only on $m$, $n$, and $f$.  Let $\mathfrak{L}^n_{f,m}$ be the maximal length of an antichain sequence of $\ZN^m \times \mathfrak{n}$ with degree growth bounded by $f$.

\begin{theorem}\label{orderbound}
Let $F,K \subseteq \K\{Y\}$ be finite subsets with $h = \mathcal{W}(F \cup K)$, $L = \mathfrak{L}^n_{f,m}$, and $\mathcal{A} = \mbox{\sf Rosenfeld-Gr\"obner}(F,K)$, where $f(i) = hf_{i}$ with $\{f_i\}$ the Fibonacci sequence.  Then
\[
\mathcal{W}\left(\bigcup_{(A,H) \in \mathcal{A}} (A \cup H)\right) \leqslant hf_{L + 1}.
\]
\end{theorem}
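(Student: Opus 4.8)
The plan is to reduce the statement to the inductive invariants already established inside the proof of Theorem~\ref{antichainbound}, combined with the length estimate furnished by $\mathfrak{L}^n_{f,m}$. Fix an arbitrary regular differential system $(A,H) \in \mathcal{A}$ together with its associated sequence $\{(G_i, D_i, A_i, H_i)\}_{i = 0}^N$ and the antichain sequence $\{s_1, \ldots, s_{L'}\} \subseteq \Theta Y$ constructed from it, where $L'$ denotes its length. The first step is to show that $L' \leqslant L$. By Theorem~\ref{antichainbound} we have $w(s_j) \leqslant h f_j = f(j)$ for every $j$. Passing through the order-preserving map $\Theta Y \to \ZN^m \times \mathfrak{n}$, under which the degree of the image of $s_j$ equals $w(s_j)$, the image of $\{s_j\}$ is an antichain sequence in $\ZN^m \times \mathfrak{n}$ whose degree growth is bounded by $f$. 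By the definition of $\mathfrak{L}^n_{f,m}$ as the maximal length of such a sequence, we conclude $L' \leqslant L = \mathfrak{L}^n_{f,m}$.

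The second step is to bound the weight of the final tuple $(G_N, D_N, A_N, H_N)$. Since $\antik_i \leqslant L' \leqslant L$ for every $i \leqslant N$, the first and second inductive claims proved along with Theorem~\ref{antichainbound} yield
\[
\mathcal{W}\left( \bigcup_{t = 0}^N A_t \right) \leqslant h f_L \qquad \text{and} \qquad \mathcal{W}\left( \bigcup_{t = 0}^N (G_t \cup D_t \cup H_t) \right) \leqslant h f_{L + 1}.
\]
In particular $\mathcal{W}(A_N) \leqslant h f_L \leqslant h f_{L+1}$ and $\mathcal{W}(H_N) \leqslant h f_{L+1}$, so that $\mathcal{W}(A_N \cup H_N) \leqslant h f_{L + 1}$.

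The third step is to propagate this through {\sf auto-partial-reduce}. Its output $(A,H) = \mbox{\sf auto-partial-reduce}(A_N, H_N)$ is assembled from partial remainders $\mbox{pd-red}(\cdot, B)$ of elements of $A_N$ and of $H_N \setminus H_{A_N}$, together with the initials and separants of the resulting set $B$. Since $w(\mbox{pd-red}(f,B)) \leqslant w(f)$ and since an initial or separant of a polynomial has weight at most that of the polynomial, it follows that $\mbox{\sf auto-partial-reduce}$ does not increase the weight, giving $\mathcal{W}(A \cup H) \leqslant \mathcal{W}(A_N \cup H_N) \leqslant h f_{L+1}$. As this holds for every $(A,H) \in \mathcal{A}$, taking the union over $\mathcal{A}$ yields the claimed bound.

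The genuinely substantive work has already been carried out in Theorem~\ref{antichainbound}, so the main points requiring care here are bookkeeping rather than new ideas. The first delicate point is verifying that the degree of the image of $s_j$ in $\ZN^m \times \mathfrak{n}$ coincides exactly with $w(s_j)$, so that $f(i) = h f_i$ legitimately bounds the degree growth and the estimate $\mathfrak{L}^n_{f,m}$ applies. The second is that the weight bounds actually used, namely those on $\mathcal{W}\!\left(\bigcup_t A_t\right)$ and $\mathcal{W}\!\left(\bigcup_t (G_t \cup D_t \cup H_t)\right)$, are the inductive invariants internal to the proof of Theorem~\ref{antichainbound} and not consequences of its final statement $w(s_j) \leqslant h f_j$ alone; the argument must therefore explicitly invoke those invariants. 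The weight non-increase of {\sf auto-partial-reduce} is routine but should be recorded, since the output weight is what ultimately controls $\mathcal{W}\!\left(\bigcup_{(A,H)\in\mathcal{A}}(A\cup H)\right)$.
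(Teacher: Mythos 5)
Your proposal is correct and follows essentially the same route as the paper's proof: bound the antichain length by $L$ via the weight-preserving embedding into $\ZN^m\times\mathfrak{n}$, invoke the inductive invariants established inside the proof of Theorem~\ref{antichainbound} to bound $\mathcal{W}(G_i\cup D_i\cup A_i\cup H_i)$ by $hf_{L+1}$, and observe that {\sf auto-partial-reduce} does not increase weights. The only difference is the order of presentation (the paper dispenses with {\sf auto-partial-reduce} first), which is immaterial.
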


\begin{proof}
Since $w(\mbox{pd-red}(p,B)) \leqslant w(p)$ for any $p \in \K\{Y\}$ and weak d-triangular set $B$, we have $\mathcal{W}(B \cup K) \leqslant \mathcal{W}(A \cup H)$, where $\{(B,K)\} = \mbox{\sf auto-partial-reduce}(A, H)$.  Hence, it suffices to bound $\mathcal{W}(G \cup D \cup A \cup H)$ whenever the tuple $(G,D,A,H)$ is added to $\mathcal{S}$ in {\sf Rosenfeld-Gr\"obner}.  

By Theorem~\ref{antichainbound} and the correspondence between antichain sequences of $\Theta Y$ and $\ZN^m \times \mathfrak{n}$, we obtain an antichain sequence of $\ZN^m \times \mathfrak{n}$ of degree growth bounded by $f(i)$, so the length of this sequence (and thus the sequence from Theorem~\ref{antichainbound}) is at most $L$.

In the proof of Theorem~\ref{antichainbound}, it is shown that for all $i\leqslant N$, for $j := \antik_i$, we have
\[
\mathcal{W}\left(\bigcup_{t = 1}^i (G_t \cup D_t \cup A_t \cup H_t)\right) \leqslant hf_{j + 1}.
\]
Since the largest possible $j$ is the length of the antichain sequence (and this $j$ is equal to $\antik_N$), for every $(G_i,D_i,A_i,H_i)$, we have 
\[
\mathcal{W}(G_i \cup D_i \cup A_i \cup H_i) \leqslant hf_{L + 1}.
\]
Since every $(G,D,A,H)$ added to $\mathcal{S}$ is equal to $(G_i,D_i,A_i,H_i)$ for some $i$, this ends the proof.
\end{proof}

\begin{corollary}\label{m2_orderbound}
	Let $m = 2$, $F,K \subseteq \K\{Y\}$ be finite subsets with $h = \mathcal{W}(F \cup K)$, $L = \mathfrak{L}^n_{f,m}$, and $\mathcal{A} = \mbox{\sf Rosenfeld-Gr\"obner}(F,K)$,  where $f(i) = f(n, h)_{i}$ with $\{f(n, h)_i\}$  given by~\eqref{eq:m2_bound}.
    Then
\[
	\mathcal{W}\left(\bigcup_{(A,H) \in \mathcal{A}} (A \cup H)\right) \leqslant f(n, h)_{L + 1}.
	\]	
\end{corollary}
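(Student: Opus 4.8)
The plan is to follow the proof of Theorem~\ref{orderbound} almost verbatim, replacing every appeal to Theorem~\ref{antichainbound} and to the Fibonacci bounds $hf_i$ by the corresponding statement of Proposition~\ref{prop:m2_refined} and the estimates established inside its proof, and replacing the degree-growth function $f(i)=hf_i$ by $f(i)=f(n,h)_i$. First I would record, exactly as in Theorem~\ref{orderbound}, that {\sf auto-partial-reduce} cannot raise weights: since $w(\mbox{pd-red}(p,B))\leqslant w(p)$, the system $(B,K)=\mbox{\sf auto-partial-reduce}(A,H)$ satisfies $\mathcal{W}(B\cup K)\leqslant\mathcal{W}(A\cup H)$. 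Hence it suffices to bound $\mathcal{W}(G\cup D\cup A\cup H)$ for every tuple placed on $\mathcal{S}$, i.e.\ for every $(G_i,D_i,A_i,H_i)$ in the sequences attached to the output systems.

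Next I would pass to the associated antichain sequence $\{s_j\}$. By Proposition~\ref{prop:m2_refined} we have $w(s_j)\leqslant f(n,h)_j$, so under the weight-preserving map $\Theta Y\to\ZN^2\times\mathfrak{n}$ the image of $\{s_j\}$ is an antichain of degree growth bounded by $f(i)=f(n,h)_i$; by the definition of $\mathfrak{L}^n_{f,2}$ its length $\ell$ satisfies $\ell\leqslant L$. I would then reuse the refined inductive estimates from the proof of Proposition~\ref{prop:m2_refined}: for a suitable admissible sequence $\{F_r\}$ of the form~\eqref{eq:m2_bound} and $j=\antik_i$, one has $\mathcal{W}\big(\bigcup_{t=0}^i A_t\big)\leqslant F_j$ and $\mathcal{W}\big(\bigcup_{t=0}^i (G_t\cup D_t\cup H_t)\big)\leqslant F_{j+1}$ when $j<L$, with the exceptional value $F_{L+1}+1$ when $j=L$. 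Since $F_{j}\leqslant F_{j+1}$, combining the two gives $\mathcal{W}(G_i\cup D_i\cup A_i\cup H_i)\leqslant F_{\antik_i+1}$ away from the terminal index. As $\antik_N=\ell\leqslant L$ and, by the maximality statement proved for~\eqref{eq:m2_bound}, $F_r\leqslant f(n,h)_r$ for all $r$, monotonicity of $\{f(n,h)_r\}$ then yields $\mathcal{W}(G_i\cup D_i\cup A_i\cup H_i)\leqslant f(n,h)_{\ell+1}\leqslant f(n,h)_{L+1}$, as required.

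The genuine obstacle is disposing of the ``$+1$'' attached to the terminal case $j=L$, where the estimate reads $F_{L+1}+1=F_\ell+F_{\ell-1}$ rather than $F_{\ell+1}$, so the chain above does not immediately land below $f(n,h)_{L+1}$. To handle it I would use the fourth inductive hypothesis of the proof of Proposition~\ref{prop:m2_refined}, by which the $+1$ can appear only once $\{s_j\}$ contains both extreme derivatives $\partial_1^{a_q}y_q$ and $\partial_2^{b_q}y_q$ for every $q$; this forces $\{F_r\}$ to have already spent its full quota of $n-1$ jump indices and forces the terminal antichain to be non-extendable. I would then argue that the surplus unit is absorbed by the gap between the realized length $\ell$ and the extremal length $L=\mathfrak{L}^n_{f,2}$, exploiting the strict growth $f(n,h)_{r+1}-f(n,h)_r\geqslant 1$ together with the maximality (front-loaded jumps) of $\{f(n,h)_r\}$ to conclude $F_\ell+F_{\ell-1}\leqslant f(n,h)_{L+1}$. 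Certifying rigorously that the terminal configuration always leaves room for this one extra degree of growth within the budget defining $\mathfrak{L}^n_{f,2}$ is the delicate point, and is precisely where the jump structure of~\eqref{eq:m2_bound} must be used.
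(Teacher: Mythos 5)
Your reduction to bounding $\mathcal{W}(G_i\cup D_i\cup A_i\cup H_i)$ for every tuple placed on $\mathcal{S}$, the passage to an antichain in $\ZN^2\times\mathfrak{n}$ of degree growth $f(n,h)_i$ via Proposition~\ref{prop:m2_refined}, and the identification of the terminal ``$+1$'' (the bound $F_{L+1}+1$ in the case $j=L$) as the only real obstacle all match the paper. The gap is in how you propose to dispose of that $+1$. You want to absorb it into a difference between the realized antichain length $\ell$ and the extremal length $L=\mathfrak{L}^n_{f,2}$, but nothing prevents $\ell=L$: a non-extendable antichain can perfectly well attain the maximal length. Worse, when the jump indices of $\{F_r\}$ happen to be front-loaded one has $F_r=f(n,h)_r$ for all $r$, so in the terminal case $F_\ell+F_{\ell-1}$ equals $f(n,h)_{L+1}+1$, overshooting by exactly the unit you need to kill; no comparison between $\{F_r\}$ and $\{f(n,h)_r\}$ can recover it. In addition, the strict-growth inequality $f(n,h)_{r+1}-f(n,h)_r\geqslant 1$ that your absorption argument relies on is false for $h=1$, where $f(n,1)_r$ is eventually constant. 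So the sequence-comparison route you sketch cannot close the argument, as you yourself suspected.

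The paper removes the $+1$ by a structural estimate on the polynomials themselves rather than on the bounding sequences. In the exceptional terminal case the antichain contains $\partial_1^{a_q}y_q$ and $\partial_2^{b_q}y_q$ for every $q$, and by the persistence of leaders (Lemma~\ref{lem:reduced}: each leader of $A_{k_j}$ either survives into $A_i$ for $i\geqslant k_j$ or is replaced by an element of which it is a derivative), the set $A_i$ for $i\geqslant k_L$ always contains, for each $j$, an element whose leader divides $s_j$. Every polynomial subsequently added to $H_i$ is reduced with respect to $A_i$, so every derivative $\partial_1^a\partial_2^b y_q$ occurring in it satisfies $a<a_q$ and $b<b_q$; hence its weight is at most $w\left(\partial_1^{a_q-1}y_q\right)+w\left(\partial_2^{b_q-1}y_q\right)\leqslant f(n,h)_L+f(n,h)_{L-1}-2<f(n,h)_{L+1}$. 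This pointwise bound on the newly added elements, available only because both pure derivatives $\partial_1^{a_q}y_q$ and $\partial_2^{b_q}y_q$ are already dominated by leaders of $A_i$, is the missing idea; without it (or an equivalent replacement) your proof does not go through in the terminal case.
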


\begin{proof}
	Replacing $hf_i$ with $f(n, h)_i$ everywhere in the proof of Theorem~\ref{orderbound}, we obtain an argument that is valid in all cases except for the case in which, for every $q$, $1 \leqslant q \leqslant n$, the antichain sequence $\{ s_j \}$ contains $\partial_1^{a_q}y_q$ and $\partial_2^{b_q}y_q$ for some $a_q$ and $b_q$.
    In this case, we still have  $\mathcal{W}(A_i) \leqslant f(n, h)_L$ for all $i$.
    We will prove that $\mathcal{W}(H_i) \leqslant f(n, h)_{L + 1}$ for all $i$.
    For $i < k_L$, this inequality follows from the proof of Theorem~\ref{antichainbound}.
    For $i \geqslant k_L$, every $h$ added to $H_i$ is reduced with respect to $A_i$ (see {\sf Rosenfeld-Gr\"obner}).
    The definition of $k_j$ implies that the set of leaders of $A_{k_j}$ contains $s_j$.
    While performing {\sf update} for $A_i$, every leader $s$ of $A_i$ either survives or is replaced with $\tilde{s}$ such that $s$ is a derivative of $\tilde{s}$.
    Hence, for all $i \geqslant k_j$, the set of leaders of $A_i$ contains either $s_j$ or $\tilde{s}$ such that $s_j$ is a derivative of $\tilde{s}$.
    Thus, since $h$ is reduced with respect to $A_i$ for $i \geqslant k_L$, for every variable $\partial_{1}^a \partial_2^b y_q$ occurring in $h$, we have $a < a_q$ and $b < b_q$.
    Thus, 
    \[
    w(h)\leqslant\max\limits_{1 \leqslant q \leqslant n} \left( w\left(\partial_1^{a_q - 1}y_q\right) + w\left( \partial_2^{b_q - 1}y_q \right) \right) \leqslant f(n, h)_{L} + f(n, h)_{L - 1} - 2 < f(n, h)_{L + 1}.\qedhere
    \]
\end{proof}

We can use Theorem~\ref{orderbound} and Corollary~\ref{m2_orderbound} to bound the orders of the output {\sf Rosenfeld-Gr\"obner}.  Let $F,K \subseteq \K\{Y\}$ be two finite subsets, and define a weight $w$ on $\Theta$ such that 
\begin{equation}\label{weightorder}
\mathcal{W}(F \cup K) = \mathcal{H}(F \cup K).
\end{equation}
This can always be done by letting $w(\theta) = \ord(\theta)$ for all derivatives $\theta$, but there are sometimes other weights that lead to equation~\eqref{weightorder} being satisfied.

\begin{example} We provide examples of differential polynomials $f$ that arise as part of systems of PDEs for which it is possible to construct a nontrivial weight $w$ such that $w(f) = \ord(f)$.  We note that we are not applying {\sf Rosenfeld-Gr\"obner} to these examples; we simply present them to demonstrate that there are nontrivial weights satisfying equation~\eqref{weightorder}.
\begin{enumerate}
\item Consider the heat equation
\[
u_t - \alpha\cdot (u_{xx} + u_{yy}) = 0,\quad f(u) := \partial_tu - \alpha\cdot(\partial_x^2u + \partial_y^2u) \in \K\{u\},
\]
where $u(x,y,t)$ is the unknown,   $\alpha$ is a positive constant, and $\K\{u\}$
has derivations $\{\partial_x,\partial_y,\partial_t\}$.  If we define a weight $w$ on $\Theta$ by 
\[
w\left(\partial_x^i\partial_y^j\partial_t^k\right) = i+j+2k,
\]
then $w(f) = 2 = \ord(f)$.

\item Consider the K-dV equation
\[
\phi_t + \phi_{xxx} + 6\phi\phi_x = 0,\quad f(\phi) := \partial_t\phi + \partial_x^3\phi + 6\phi\partial_x\phi \in \K\{\phi\},
\]
where $\phi(x,t)$ is the unknown and $\K\{\phi\}$ has derivations $ \{\partial_x,\partial_t\}$. Define a weight $w$ on $\Theta$ by
\[
w\left(\partial_x^i\partial_t^j\right) = i + 3j,
\]
so that $w(f) = 3 = \ord(f)$.
\end{enumerate}
\end{example}

Using Theorem~\ref{orderbound}, Corollary~\ref{m2_orderbound}, and~\eqref{weightorder}, we obtain the following order bound for the output of {\sf Rosenfeld-Gr\"obner}:

\begin{corollary}\label{refinedbound}
Let $F,K \subseteq \K\{Y\}$ be finite subsets with $h = \mathcal{H}(F \cup K)$, $L = \mathfrak{L}^n_{f,m}$, $\mathcal{A} = \mbox{\sf Rosenfeld-Gr\"obner}(F,K)$, where $f(i) = f(n,h)_i$ with $\{f(n,h)_i\}$ the sequence given by~\eqref{eq:m2_bound} if $m=2$ and $f(i) = hf_i$ with $\{f_i\}$ the Fibonacci sequence if $m > 2$.  Let $w\left(\partial_1^{i_1}\cdots\partial_m^{i_m}\right) = c_1i_1 + \cdots + c_mi_m$ be a weight defined on $\Theta$ such that $\mathcal{W}(F \cup K) = \mathcal{H}(F \cup K)$.  Then, for all $g \in \mathcal{A}$,
\[
\ord(g,\partial_i) \leqslant \begin{cases} \frac{f(n,h)_{L+1}}{c_i} &\text{if }m=2 \\ 
\frac{hf_{L+1}}{c_i} &\text{if } m > 2. \end{cases}
\]
\end{corollary}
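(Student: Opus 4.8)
The plan is to reduce the per-derivation order bound to the total weight bounds already established in Theorem~\ref{orderbound} and Corollary~\ref{m2_orderbound}, using the elementary observation that the weight of a derivative dominates any single coordinate of its exponent vector, scaled by the corresponding positive coefficient.

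First I would note that the hypothesis $\mathcal{W}(F\cup K) = \mathcal{H}(F\cup K)$ makes the quantity $h$ appearing in this corollary coincide with the quantity $h = \mathcal{W}(F\cup K)$ used in Theorem~\ref{orderbound} and Corollary~\ref{m2_orderbound}. Consequently, the weight bounds from those results apply verbatim with the present $h$: for $m > 2$,
\[
\mathcal{W}\Bigl(\bigcup_{(A,H)\in\mathcal{A}}(A\cup H)\Bigr)\leqslant hf_{L+1},
\]
while for $m = 2$ the right-hand side is replaced by $f(n,h)_{L+1}$, where in each case $f$ and $L = \mathfrak{L}^n_{f,m}$ are exactly those specified in the statement.

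Next I would fix a differential polynomial $g$ in the output together with a derivative $u = \partial_1^{i_1}\cdots\partial_m^{i_m}y$ actually appearing in $g$. Since each $c_j \in \Z_{>0}$ and each $i_j \geqslant 0$, discarding all terms except the $i$-th one gives
\[
c_i\, i_i \leqslant c_1 i_1 + \cdots + c_m i_m = w(u) \leqslant w(g) \leqslant \mathcal{W}\Bigl(\bigcup_{(A,H)\in\mathcal{A}}(A\cup H)\Bigr).
\]
Combining this with the weight bound above and dividing by $c_i$ yields $i_i \leqslant hf_{L+1}/c_i$ when $m > 2$ and $i_i \leqslant f(n,h)_{L+1}/c_i$ when $m = 2$. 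Taking the maximum over all derivatives $u$ occurring in $g$ then produces $\ord(g,\partial_i) \leqslant hf_{L+1}/c_i$ (respectively $f(n,h)_{L+1}/c_i$), which is the claimed bound.

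There is no serious obstacle here, as all of the difficulty has been absorbed into Theorem~\ref{orderbound} and Corollary~\ref{m2_orderbound}; what remains is purely bookkeeping. The two points deserving care are, first, verifying that the two notions of $h$ genuinely agree under the hypothesis $\mathcal{W}(F\cup K) = \mathcal{H}(F\cup K)$ so that the weight bounds transfer, and second, exploiting the strict positivity of the coefficients $c_j$ so that the single term $c_i i_i$ can be isolated as a lower bound for the full weight $w(u)$. Were some $c_j$ permitted to vanish, this last estimate would break down, so the strict positivity built into the definition of a weight is exactly what makes the division by $c_i$ legitimate.
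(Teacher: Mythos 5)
Your proposal is correct and matches the paper's (implicit) argument: the paper states this corollary without a separate proof, deriving it directly from Theorem~\ref{orderbound}, Corollary~\ref{m2_orderbound}, and the hypothesis $\mathcal{W}(F\cup K)=\mathcal{H}(F\cup K)$, exactly as you do by isolating the term $c_i i_i \leqslant w(u)$ and dividing by $c_i>0$. Your explicit attention to why the two notions of $h$ coincide and why strict positivity of the $c_j$ is needed is a faithful spelling-out of the same reasoning.
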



\section{Specific values}\label{sec:specific}
In order to apply the results of the previous section, we need to be able to effectively compute $\mathfrak{L}^n_{f,m}$.  \citep{Pierce} only proved the existence of this number, without an analysis of how to construct it.  \citep{FLS} constructed an upper bound for $m=1,2$.  The first analysis for the case of arbitrary $m$ appears in \citep{LSO}.  

Let $f\colon \Z_{>0} \to \ZN$ be an increasing function.
Let us define a function $\Psi_{f, m} \colon \Z_{>0} \times \ZN^m \to \ZN$ by the following relations:
\[\begin{cases}
\Psi_{f, m}(i, (0, \ldots, 0, u_m)) = i,
\\
\Psi_{f, m}(i - 1, (u_1, \ldots, u_r, 0, \ldots, 0, u_m))\\\quad\quad= \Psi_{f, m}(i, (u_1, \ldots, u_r - 1, f(i) - f(i - 1) + u_m + 1, 0, \ldots, 0))
, & r < m - 1, u_r > 0,\\
\Psi_{f, m}(i - 1, (u_1, \ldots, u_m))\\   \quad\quad = \Psi_{f, m}(i, (u_1, \ldots, u_{m - 1} - 1, f(i) - f(i - 1) + u_m + 1)),
 &u_{m - 1} > 0.
\end{cases}\]
\begin{proposition}[{\citep[Corollary~3.10]{LSO}}]
The maximal length of an antichain sequence in $\ZN^m$ with degree growth bounded by $f$ does not exceed 
\[
\Psi_{f, m}(1, (f(1), 0,\ldots, 0)).
\]
\end{proposition}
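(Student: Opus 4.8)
The plan is to recognize this as a length-function bound for the well-quasi-order $(\ZN^m,\leqslant)$ under the degree control $f$, in the spirit of \citep{Pierce,FLS}, and to prove a parametrized statement by well-founded induction whose induction scheme is exactly the recursion defining $\Psi_{f,m}$; the corollary then follows by specializing the state. The guiding interpretation is that $\Psi_{f,m}(i,(u_1,\dots,u_m))$ bounds the total length of an $f$-controlled antichain sequence $s_1,s_2,\dots$ in $\ZN^m$ (with $\deg(s_t)\leqslant f(t)$) in a situation where position $i$ is the next to be filled, the coordinates $u_1,\dots,u_{m-1}$ record how much room remains in the first $m-1$ directions for elements forced to drop strictly below those already chosen, and $u_m$ accumulates the residual degree budget available to the last, ``free'' coordinate. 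The base relation $\Psi_{f,m}(i,(0,\dots,0,u_m))=i$ then expresses that, once the controlling coordinates are spent, the surviving elements are totally ordered in the last coordinate and the antichain cannot be prolonged.

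First I would make the dimension-reduction step precise, following the inductive proof of Dickson's lemma (cf.\ \citep[Lemma~4.4]{Pierce}). Fixing the first element $s_1=(a_1,\dots,a_m)$, every later element is incomparable to it, so it drops strictly below $a_r$ in some least coordinate $r$; grouping the tail by this $r$ and by the value attained there reduces each group to an antichain in the remaining $m-1$ coordinates. The quantitative content is to follow how the degree budget flows across this reduction. Spending one admissible value in the active controlling direction $r$ is the transition $u_r\mapsto u_r-1$, and the newly opened direction $r+1$ must inherit a budget equal to the increment $f(i)-f(i-1)$ of the degree bound between consecutive positions, augmented by the accumulated free budget $u_m$ and by one; this is exactly the carry $f(i)-f(i-1)+u_m+1$ in the two recursive clauses, and the reset of the later coordinates to $0$ reflects that the reduction starts afresh in the lower-dimensional space.

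The steps, in order, would be: (i) prove the base clause, showing that an antichain with all controlling coordinates fixed collapses to a chain in the last coordinate and so contributes only the additive term $i$; (ii) prove the two recursive clauses as upper bounds, by checking that an admissible next element either leaves the controlling profile $(u_1,\dots,u_{m-1})$ unchanged, in which case it is absorbed into the free coordinate and its degree is charged to $u_m$, or strictly lowers that profile, triggering the carry and the dimension drop described above; (iii) verify well-foundedness of the recursion, which is immediate since each recursive clause strictly decreases $(u_1,\dots,u_{m-1})$ in lexicographic order while increasing $i$, so the descent terminates at the base clause and $\Psi_{f,m}$ is well defined; and (iv) specialize the state, noting that at the first position the whole budget $f(1)$ is available and no direction has yet been constrained, which is encoded by $(1,(f(1),0,\dots,0))$ and yields the asserted bound.

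The step I expect to be the main obstacle is (ii): showing that the carry $f(i)-f(i-1)+u_m+1$ is at once large enough to dominate every admissible choice of the next element and small enough to keep the bound finite. This is an amortization argument that must correctly couple the two independent sources of constraint, namely the degree control $f$, which ties the position $i$ to the allowed total degree, and the combinatorial reduction, which ties the controlling coordinates to lower-dimensional sub-antichains. Pinning down the exact ``$+1$'' and the resetting of $u_m$, so that no controlled antichain escapes the bound while $\Psi_{f,m}$ stays finite, is the delicate point; once the invariant that matches a genuine antichain to a descending path through the $\Psi$-recursion is correctly stated, the remaining verifications are routine inductions.
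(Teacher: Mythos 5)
First, note that the paper does not prove this proposition at all: it is imported verbatim as \citep[Corollary~3.10]{LSO}, so there is no in-paper argument to compare yours against; the question is whether your sketch would reconstruct the proof from that reference. As it stands, it would not, because the one step you yourself flag as ``the main obstacle'' --- step (ii), the verification that the carry $f(i)-f(i-1)+u_m+1$ together with the reset of the trailing coordinates dominates every admissible continuation of the antichain --- is precisely the entire content of the result. Everything else in your outline (well-foundedness of the lexicographic descent on the first $m-1$ coordinates, the specialization to the initial state $(1,(f(1),0,\ldots,0))$, the qualitative Dickson-style grouping of the tail by the first coordinate in which an element drops below $s_1$) is either routine or already contained in \citep[Lemma~4.4]{Pierce}. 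What is missing is the precise parametrized invariant: a statement of the form ``if $s_1,\ldots,s_i$ have been chosen and the associated state is $(i,(u_1,\ldots,u_m))$, then every $f$-controlled antichain extending them has total length at most $\Psi_{f,m}(i,(u_1,\ldots,u_m))$,'' together with a proof that appending one admissible element transforms the state according to exactly one of the two recursive clauses. Without that statement one cannot check that the base clause really returns $i$ rather than something larger, nor that a single new element never forces more than one unit of descent in the controlling coordinates, nor that resetting $u_m$ to zero after a carry loses no budget.

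A second, more concrete worry: your narrative invokes the classical Dickson reduction, which partitions the tail of the sequence by the pair consisting of the least coordinate $r$ in which an element drops strictly below $s_1$ and the value attained there, producing many \emph{parallel} sub-antichains in dimension $m-1$. The recursion for $\Psi_{f,m}$ does not branch: each clause makes a single recursive call, decrementing the rightmost nonzero coordinate among the first $m-1$ and loading the next one. Reconciling a branching combinatorial decomposition with a non-branching recursion requires a sequentialization or amortization argument --- this is exactly where the accumulation into $u_m$ and the ``$+1$'' earn their keep --- and your sketch does not supply it. I would either carry out that reconciliation explicitly, or, since the result is in any case being quoted from \citep{LSO}, simply cite it as the paper does.
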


Let us also define the sequence $\psi_0, \psi_1, \ldots$ by the relations $\psi_0 = 0$ and \[\psi_{i + 1} = \Psi_{f_i, m} (1, (f_i(1), 0, \ldots, 0)) + \psi_i,\quad f_i(x) := f(x + \psi_i).\]

\begin{proposition}[{\citep[Corollary~3.14]{LSO}}]\label{antichainlength} The maximal length of an antichain sequence in $\ZN^m \times \mathfrak{n}$ with degree growth bounded by $f$ does not exceed $\psi_n$.
\end{proposition}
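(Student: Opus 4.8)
The plan is to prove the bound by induction on the number $n$ of ``colors,'' treating $\ZN^m \times \mathfrak{n}$ as the disjoint union of $n$ copies of $(\ZN^m,\preccurlyeq)$. Since two elements with distinct second coordinates are automatically $\preccurlyeq$-incomparable, an antichain sequence in $\ZN^m\times\mathfrak{n}$ is exactly a sequence whose restriction to each copy $\ZN^m\times\{k\}$ is an antichain sequence in $\ZN^m$. The engine of the induction is the preceding proposition \citep[Corollary~3.10]{LSO}, which bounds the length of a single-copy antichain sequence with degree growth bounded by a control $g$ by $\Psi_{g,m}(1,(g(1),0,\ldots,0))$. I write $\psi_i(g)$ for the sequence obtained from a control $g$ exactly as $\psi_0,\ldots,\psi_n$ are obtained from $f$, so that the statement to be proved reads: the maximal length $M_n(f)$ of an $f$-degree-bounded antichain sequence in $\ZN^m\times\mathfrak{n}$ satisfies $M_n(f)\leqslant\psi_n(f)$.

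First I would record the telescoping identity that makes the $n$ single-copy bounds compose into $\psi_n$. Writing $f_1(x):=f(x+\psi_1(f))$ and unfolding the defining recursion, one checks by induction on $i$ that
\[
\psi_i(f_1)=\psi_{i+1}(f)-\psi_1(f),
\]
because the $i$-th increment of $\psi(f_1)$ is driven by the control $f_1\bigl(x+\psi_i(f_1)\bigr)=f\bigl(x+\psi_{i+1}(f)\bigr)$, which is exactly the control driving the $(i+1)$-th increment of $\psi(f)$. In particular $\psi_1(f)+\psi_{n-1}(f_1)=\psi_n(f)$, so it suffices to establish the one-step recursion
\[
M_n(f)\leqslant \psi_1(f)+M_{n-1}\bigl(f(\,\cdot\,+\psi_1(f))\bigr).
\]

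To prove the one-step recursion I would peel off one copy from an optimal $f$-degree-bounded antichain sequence $S=(s_1,\ldots,s_\ell)$ and invoke the induction hypothesis on the remaining $n-1$ copies. The elementary observation that drives this is a \emph{control-shift} property: if a single color $c$ occurs $a$ times in $S$, then deleting all its occurrences and reindexing yields an antichain sequence over the remaining copies whose new $j$-th element sat in position $j+a_j$ of $S$ with $0\leqslant a_j\leqslant a$; since $f$ is increasing, the reindexed sequence is controlled by $f(\,\cdot\,+a)$, so the induction hypothesis bounds its length by $M_{n-1}\bigl(f(\,\cdot\,+a)\bigr)$. As $a\mapsto a+\psi_{n-1}\bigl(f(\,\cdot\,+a)\bigr)$ is nondecreasing in $a$, the recursion would follow once the peeled copy is chosen so that its size satisfies $a\leqslant\psi_1(f)$, whereupon $a+M_{n-1}\bigl(f(\,\cdot\,+a)\bigr)\leqslant\psi_1(f)+M_{n-1}(f_1)=\psi_n(f)$.

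The main obstacle, and the technical heart of the argument, is reconciling the two different ways the control function is ``seen'' by a monochromatic projection. The global control bounds $\deg(s_i)$ in terms of the position $i$ in the full sequence, whereas to bound the size of a peeled color through \citep[Corollary~3.10]{LSO} one needs a bound in terms of the index \emph{within that color's subsequence}; these differ precisely by how many elements of other colors are interleaved before each element, and a color appearing only late can carry a large degree budget and hence be long. The content of the proof is therefore to peel the copies in an order for which these interleaving shifts accumulate to exactly the partial sums $\psi_1(f),\psi_2(f),\ldots$, so that the loss incurred at each peeling step is absorbed by the substitution $f\mapsto f(\,\cdot\,+\psi_i(f))$ built into the definition of $\psi_n$. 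Carrying out this bookkeeping — rather than any single inequality — is where the difficulty lies.
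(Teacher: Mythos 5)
First, note that the paper itself gives no proof of this proposition: it is imported verbatim from \citep[Corollary~3.14]{LSO}, so your argument has to stand on its own. Its skeleton is sound up to a point: the telescoping identity $\psi_i(f_1)=\psi_{i+1}(f)-\psi_1(f)$ is correct, and so is the observation that deleting all $a$ occurrences of one color from an antichain sequence controlled by $f$ leaves an $(n-1)$-color antichain sequence controlled by $f(\,\cdot\,+a)$, whence $\ell\leqslant a+M_{n-1}\bigl(f(\,\cdot\,+a)\bigr)$ for every choice of peeled color. The proof then hinges entirely on the sentence ``the recursion would follow once the peeled copy is chosen so that its size satisfies $a\leqslant\psi_1(f)$,'' and you never produce such a color; your closing paragraph concedes that carrying out this bookkeeping ``is where the difficulty lies,'' which is an admission that the central step is missing rather than an argument for it.

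Second, this is not a repairable bookkeeping detail: the needed existence claim fails in general. A color occurring at positions $i_1<\cdots<i_a$ is a single-copy antichain sequence controlled by $j\mapsto f(i_j)$, not by $f(j)$, and in a round-robin interleaving of $n$ colors one has $i_j\approx nj$, so each monochromatic subsequence is controlled only by the much larger function $f(n\,\cdot)$. For $m=2$ you happen to be safe, because any antichain sequence in $\ZN^2$ has length at most $\deg(s_1)+1$, which depends only on the first value of the control, so the color of $s_1$ has at most $f(1)+1=\psi_1(f)$ occurrences. But for $m\geqslant 3$ the maximal length of a single-copy antichain sequence depends on the whole control function --- see the explicit maximal sequence for $m=3$, $n=1$ in Section~\ref{sec:specific}, whose length $c_h$ is driven by the values of $f$ at the points $c_{i-1}+1$ --- so in an interleaved configuration every color can simultaneously exceed $\psi_1(f)$ occurrences and there is nothing admissible to peel. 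Choosing the peeling order so that the shifts ``accumulate to exactly the partial sums $\psi_1(f),\psi_2(f),\ldots$'' is therefore the entire content of the proof, not a detail to be deferred; as written the induction does not close, and you would need either a genuinely different selection principle for the peeled color or a different induction altogether, as in \citep{LSO}.
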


Now, let us apply this technique to the functions $f_1(i) = f(n,h)_i$ and $f_2(i) = h f_i$.  Then, by Theorem~\ref{orderbound} and Corollary~\ref{m2_orderbound}, an upper bound on the weights of the output of  {\sf Rosenfeld-Gr\"obner} will be $f_1(\mathfrak{L}^n_{f_1,m}+1)$ if $m=2$ and $f_2(\mathfrak{L}^n_{f_2,m}+1)$ if $m > 2$.  In general, we do not have a formula for $\mathfrak{L}^n_{f_1,m}$ and $\mathfrak{L}^n_{f_2,m}$ for arbitrary $h,m,n$ that improves the one given in Proposition~\ref{antichainlength}; however, we can compute $\mathfrak{L}^n_{f_1,m}$ and $\mathfrak{L}^n_{f_2,m}$ for some specific values of $h,m,n$.

If $\mathcal{W}(F \cup K) = \mathcal{H}(F \cup K) = h$, we can use Corollary~\ref{refinedbound} to produce perhaps sharper bounds for the order of the elements of $\mbox{\sf Rosenfeld-Gr\"obner}(F,K)$ with respect to particular derivations.  In the examples that follow, we calculate upper bounds for $\ord(g,\partial_1)$ for $g \in \mbox{\sf Rosenfeld-Gr\"obner}(F,K)$, where $w\left(\partial_1^{i_1}\cdots\partial_m^{i_m}\right) = c_1i_1+\cdots+c_mi_m$ in the case in which $c_1=2$ and the case in which $c_1=3$.  We note that in the tables that follow, ``N/A" appears whenever we cannot have the given initial order $h$ with given $c_i$ as part of the weight function.

\begin{enumerate}
	\item Assume that $n = 1$ and $m = 2$. Then the maximal length of an antichain sequence does not exceed $h + 1$.
    In this case, the weights of the resulting polynomials are bounded by $f(1,h)_{h+2}$, which results in the following table:
  \begin{center}
    \begin{tabular}{ | c | c| c | c |c |c |c | c|c|c|c|}
    \hline
    $h$ & 1 & 2& 3& 4 &5 &6& 7&8&9&10\\  \hline
    $f(1,h)_{h+2}$& 1 & 4 & 11 & 25 & 55 & 106 & 205 & 386 & 713 & 1297 \\\hline
    $\ord(g,\partial_1)$, $c_1=2$ & N/A & 2 & 5 & 12 & 26 & 53 & 102 & 193 & 356 & 648 \\ \hline
    $\ord(g,\partial_1)$, $c_1 = 3$ & N/A & N/A & 3 & 8 & 17 & 35 & 68 & 128 & 237 & 432 \\ \hline 
    \end{tabular}
\end{center}

   \item Assume that $m = 2$ and $n$ is arbitrary.
    Then the maximal length of an antichain sequence does not exceed $b_n$, where $b_n$ satisfies $b_1 = h + 1$ and $b_{n + 1} = f(n,h)_{b_n + 1} + b_n + 1$, which results in the following table:

\begin{center}
    \begin{tabular}{ | c | c| c|c| c | c |}
    \hline
    $n$ & $h$& $b_n$& $f(n,h)_{b_n+1}$ & $\ord(g,\partial_1)$, $c_1 = 2$ & $\ord(g,\partial_1)$, $c_1 = 3$\\ \hline
    2&1 & 5 &4 & N/A & N/A\\ \hline
    2&2  & 9 &77 & 38 & N/A \\ \hline
    2 &3 &18 &9,960 & 4,980 & 3,320 \\
    \hline
       2 &4 &34 &$31,206,974$ & $15,603,487$ & $10,402,324$ \\
    \hline
       3 &1 &11 &90 & N/A & N/A \\
    \hline
    \end{tabular}
\end{center}  

\item Assume that $m=3$ and $n=1$.  We can construct the maximal length antichain sequence of $\ZN^3$ using the methods of \citep{LSO} and the function $f(i) = hf_i$, resulting in the following sequence:
\begin{multline*}
(h,0,0), (h-1,1,0),(h-1,0,h+1),(h-2,2h+2,0),\dots, \\
(h-2,0,hf_{2h+6}-(h-2)),\dots,(h-i,hf_{c_{i-1}+1}-(h-i),0), \dots, \\
(h-i,0,hf_{c_i}-(h-i)),\dots, (0,hf_{c_{h-1}+1},0),\dots,(0,0,hf_{c_h}),
\end{multline*}
where the sequence $c_i$ is given by $c_0 = 1$ and for $1 \leqslant i \leqslant h$, 
\[
c_i = c_{i-1} + 1 + hf_{c_{i-1}+1}-(h-i).
\]
As a result, we see that the maximal length of an antichain sequence is equal to $c_h$.
\end{enumerate}

Below is a table of some maximal lengths $\mathfrak{L}^n_{f,m}$ and weights $f(\mathfrak{L}^n_{f,m}+1)$, where $f(i) = hf_i$, for $m=3$, $4$, and $5$:

    \begin{center}
    \begin{tabular}{ |c | c | c| c|c|}
    \hline
    $m$& $n$ & $h$& length & weight \\ \hline
 3&    1&1 & 3 &3 \\ \hline
 3&   1&2  & 10 & 178 \\ \hline
    4&      1 &1 & 5 & 8 \\
    \hline
    5&      1 &1 & 20 & 10,946 \\
    \hline
    \end{tabular}
\end{center}

\section{Order lower bound}\label{sec:lower}
This section gives a lower bound for the orders of the output of {\sf Rosenfeld-Gr\"obner}, coming from the lower bound for degrees of elements of a Gr\"obner basis from \citep{Yap}. To be specific, we show that for $m,h$ sufficiently large, there is a set of $r$ differential polynomials $F \subseteq \K\{y\}$ of order at most $h$, where $\K$ is equipped with $m$ derivations, $r \sim m/2$, and $\K$ is constant with respect to all of the derivations, such that if $\mathcal{A} = \mbox{\sf Rosenfeld-Gr\"obner}(F,\{1\})$, then
\begin{equation}\label{eq:lowerbound}
\mathcal{H}\left(\bigcup_{(A,H) \in \mathcal{A}} (A \cup H)\right) \geqslant h^{2^r}.
\end{equation}
The arguments presented here are standard, and we include them for completeness.  We first note the following standard fact about differential ideals generated by linear differential polynomials.

\begin{proposition}\label{lineardecomp}
Suppose $F,K \subseteq \K\{Y\}$ are composed of linear differential polynomials.  Then the output of $\mbox{\sf Rosenfeld-Gr\"obner}(F,K)$ is either empty or consists of a single regular differential system $(A,H)$ with $A$ and $H$ both composed of linear differential polynomials.
\end{proposition}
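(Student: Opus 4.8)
The plan is to track linearity through every operation of {\sf Rosenfeld-Gr\"obner} and to show that, for linear input, each case split collapses to a single surviving branch, so that at most one regular differential system is produced. Throughout, by \emph{linear} I mean of degree at most one in the derivatives $\Theta Y$ with coefficients in $\K$, so that in particular nonzero constants of $\K$ count as linear.

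First I would record the elementary closure properties. If $g = \sum_u c_u u + c_0$ is linear with $c_u, c_0 \in \K$, then $\partial_j g = \sum_u (\partial_j c_u) u + \sum_u c_u (\partial_j u) + \partial_j c_0$ is again linear, so $\theta g$ is linear for every $\theta \in \Theta$. Writing $v = \lead(g)$, the coefficient $c_v$ lies in $\K$, and since $g$ has degree one in $v$ we get $\init(g) = \sepp(g) = c_v \in \K$; that is, for a non-constant linear polynomial the initial and separant coincide and are nonzero constants. From this, $\mbox{d-red}$, $\mbox{pd-red}$, and the $\Delta$-polynomial all preserve linearity: each reduction step replaces $p$ by a $\K$-linear combination $s\,p - a\,\theta A_u$ with $s = \sepp(A_u)$ or $s = \init(A_u)$ and $a \in \K$, while $\Delta(f,g) = \sepp(g)\phi(f) - \sepp(f)\psi(g)$ is a $\K$-linear combination of derivatives of the linear polynomials $f$ and $g$. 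Hence if a quadruple $(G,D,A,H)$ consists of linear polynomials, so does every quadruple produced from it by {\sf update} (which adds only $\Delta$-polynomials to $D$, elements of $A$ to $G$, and $\sepp(p),\init(p)$ to $H$) and every system produced by {\sf auto-partial-reduce}.

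The heart of the argument is that the two degenerate branches of the case split never reach the output. When {\sf Rosenfeld-Gr\"obner} processes $p$ and obtains $\bar p = \mbox{d-red}(p,A) \notin \K$, it splits into three branches; by the previous paragraph $\bar p$ is linear, so $c := \init(\bar p) = \sepp(\bar p)$ is a \emph{nonzero constant}. Both the branch $(G \cup \{\bar p_s, \sepp(\bar p)\}, \bar D, A, H \cup \{\init(\bar p)\})$ and the branch $(\bar G \cup \{\bar p_i, \init(\bar p)\}, \bar D, A, H)$ place this constant $c$ into $G$. Since $c$ is never removed from $G$ except by being selected, and $\mbox{d-red}(c, A) = c \neq 0$ with $c \in \K$, each descendant of these two branches eventually selects $c$ and falls through the inner conditionals of the algorithm, contributing nothing to $\bar{\mathcal S}$; such branches are discarded, having detected $1 \in \{F\}:K^\infty$. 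Only the branch $\mbox{\sf update}(\bar G, \bar D, A, H, \bar p)$, which places $c$ into $H$ (harmlessly, as a unit) rather than into $G$, can survive. The same reasoning shows that whenever $\mbox{d-red}(p,A) \in \K \setminus \{0\}$, the active branch itself dies and the output is empty.

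Consequently, of the (at most three) branches created at each split, only the {\sf update} branch can have a descendant reaching the state $G \cup D = \varnothing$, so at most one quadruple is ever passed to {\sf auto-partial-reduce}; by the closure properties above this returns either the empty set or a single regular differential system $(A,H)$ with $A$ and $H$ composed of linear polynomials, which is exactly the claim. I expect the main obstacle to be the bookkeeping in the branch-death step: one must check that the planted constant $c$ genuinely persists in $G$ through any intervening {\sf update} calls (which only enlarge $G$) and through the processing of the auxiliary polynomials $\bar p_i, \bar p_s$ (themselves linear, being the tails $\bar p - c\,\lead(\bar p)$), so that every descendant of the two degenerate branches is indeed discarded and no second output system can arise.
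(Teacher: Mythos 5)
Your argument is correct. Note that the paper itself gives no proof of Proposition~\ref{lineardecomp}: it is stated as a ``standard fact,'' so there is nothing to compare against, and your write-up in effect supplies the missing argument. The two key observations are exactly the right ones: (i) linearity is preserved by $\Theta$, by $\Delta$-polynomials, and by the reduction algorithms, because for a nonconstant linear $g$ one has $\init(g)=\sepp(g)\in\K\setminus\{0\}$, so every reduction step is a $\K$-linear combination of linear polynomials; and (ii) the two non-{\sf update} branches of the case split each insert that nonzero constant into $G$, where it persists (nothing ever removes an element of $G$ except selection, and {\sf update} only enlarges $G$), so before any descendant can reach the state $G\cup D=\varnothing$ the constant must be selected, at which point $\mbox{d-red}(c,A)=c\in\K\setminus\{0\}$ and the branch is silently discarded. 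By induction on the branching tree, at most one quadruple ever reaches {\sf auto-partial-reduce}, and linearity of $B$ and $K$ follows from (i). One small imprecision: a discarded branch does not witness $1\in\{F\}:K^\infty$ globally, only that the component associated with that branch is trivial (equivalently, contains a unit); the conclusion you draw from it --- that the branch contributes nothing to $\mathcal{A}$ --- is what matters and is correct.
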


Suppose now we apply {\sf Rosenfeld-Gr\"obner} to $(F,\{1\})$, where $F$ consists of linear differential polynomials, in order to obtain a regular decomposition of $\{F\}$.  Since every element of $F$ is linear, $[F]$ is a prime differential ideal, so by Proposition~\ref{lineardecomp}, we have
\[
[F] = \{F\} = [A]:H^\infty
\]
for some regular differential system $(A,H)$, with $A$ and $H$ both composed of linear differential polynomials.  Since every element of $A$ is linear, after performing scalar multiplications and addition, $A$ can be transformed to an autoreduced set $\bar{A}$ without affecting the leaders and orders of elements of $A$.  Since $(A,H)$ is a regular differential system, $\bar{A}$ is a characteristic set of $[F]$.  So, it suffices to find a lower bound on the orders of elements of linear characteristic sets in $\K\{Y\}$.

There is a well-studied one-to-one correspondence between polynomials in $\K[x_1,\dots,x_m]$ and homogeneous linear differential polynomials in $\K\{y\}$ with $m$ derivations and $\K$ a field of constants:
\begin{equation}\label{correspondence}
\sum c_{i_1,\dots,i_m}x_1^{i_1}\cdots x_m^{i_m} \leftrightarrow \sum c_{i_1,\dots,i_m}\partial_1^{i_1}\cdots \partial_m^{i_m}y.
\end{equation}
Any orderly ranking on $\Theta y$ then determines a graded monomial order on $\K[x_1,\dots,x_m]$.

Given a polynomial $f \in \K[x_1,\dots,x_m]$, let $\tilde{f} \in \K\{y\}$ be its corresponding differential polynomial under \eqref{correspondence}.  By the discussion above, if we have a collection of polynomials $f_1,\dots,f_r \in \K[x_1,\dots,x_m]$, we can construct a characteristic set $C = \{C_1,\dots,C_s\}$ of $[\tilde{f}_1,\dots,\tilde{f}_r] \subseteq \K\{y\}$ consisting of homogeneous linear differential polynomials, and so each $C_i \in \K\{y\}$ is in fact equal to $\tilde{g}_i$ for some $g_i \in \K[x_1,\dots,x_m]$.  

\begin{proposition}[{cf.  \citep[page~352]{Wu},\citep{Gerdt}}]\label{polynomialgrobner}
With the notation above, $\{g_1,\dots,g_s\} \subseteq \K[x_1,\dots,x_m]$ is a Gr\"obner basis of the ideal $I = (f_1,\dots,f_r)$.
\end{proposition}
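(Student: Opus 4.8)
The plan is to translate the whole statement through the correspondence \eqref{correspondence} and reduce it to the standard fact that differential reduction by a characteristic set of homogeneous linear differential polynomials coincides with multivariate polynomial division. First I would record the precise dictionary. The map \eqref{correspondence} is a $\K$-linear isomorphism from the space of homogeneous linear differential polynomials in $\K\{y\}$ onto $\K[x_1,\dots,x_m]$ under which the derivation $\partial_j$ becomes multiplication by $x_j$; hence the $\K$-span of $\{\theta\tilde f_k : \theta\in\Theta,\ 1\leqslant k\leqslant r\}$ — which, as $\K$ is constant and the $\tilde f_k$ are homogeneous of $y$-degree one, is precisely the linear part of $[\tilde f_1,\dots,\tilde f_r]$ — corresponds to the polynomial ideal $I=(f_1,\dots,f_r)$. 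Concretely, $f=\sum_k p_kf_k\in I$ gives $\tilde f=\sum_k p_k(\partial)\tilde f_k\in[\tilde f_1,\dots,\tilde f_r]$. Under the same dictionary the chosen orderly ranking becomes the graded monomial order, so for a homogeneous linear $\tilde g$ the leader $\lead(\tilde g)$ corresponds to the leading monomial $\operatorname{LM}(g)$, and the relation $\preccurlyeq$ on $\Theta y$ corresponds to divisibility of monomials.

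Next I would exploit that, for a homogeneous linear differential polynomial, both $\init$ and $\sepp$ equal the nonzero constant coefficient of its leader. Therefore the reductions performed by $\mbox{d-red}(\cdot,C)$ never require multiplication by a nonconstant factor, and each elementary step — cancelling a derivative $\theta\lead(\tilde g_i)$ present in the current polynomial by subtracting a scalar multiple of $\theta\tilde g_i$ — corresponds exactly to cancelling the monomial $x^\alpha\operatorname{LM}(g_i)$ by subtracting a scalar multiple of $x^\alpha g_i$. Since reduction is rank-non-increasing, every such $\theta\tilde g_i$ has leader $\leqslant\lead(\tilde f)$, so $\mbox{d-red}(\cdot,C)$ translates verbatim into the multivariate division algorithm modulo $G:=\{g_1,\dots,g_s\}$: it yields $f=\sum_i q_ig_i+\rho$ with $\operatorname{LM}(q_ig_i)\leqslant\operatorname{LM}(f)$ for all $i$ and with $\rho$ reduced with respect to $G$, and $\rho=0$ if and only if $\mbox{d-red}(\tilde f,C)=0$.

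It then remains to feed in the characteristic-set property. Because $F$ is linear, $[\tilde f_1,\dots,\tilde f_r]$ is prime and $C$ is its characteristic set, so membership is decided by reduction: for every $f\in I$ we have $\tilde f\in[\tilde f_1,\dots,\tilde f_r]$, whence $\mbox{d-red}(\tilde f,C)=0$ and therefore $\rho=0$ in the division above. For nonzero $f\in I$ this gives $f=\sum_i q_ig_i$ with $\operatorname{LM}(q_ig_i)\leqslant\operatorname{LM}(f)$; since the left-hand side has leading monomial $\operatorname{LM}(f)$, at least one summand must attain it, so $\operatorname{LM}(g_i)\mid\operatorname{LM}(f)$ for some $i$. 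Hence the leading monomials of $G$ generate the leading-term ideal of $I$, which is exactly the assertion that $\{g_1,\dots,g_s\}$ is a Gr\"obner basis of $I$.

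The routine but essential work, and the only place needing genuine checking, is the verification in the middle paragraph that differential reduction and polynomial division are step-for-step the same operation; everything hinges on $\init$ and $\sepp$ being nonzero constants, so that no extraneous factors appear, leaders map cleanly to leading monomials, and the divisor multipliers satisfy $\operatorname{LM}(q_ig_i)\leqslant\operatorname{LM}(f)$. The closing leading-term argument and the primality/membership input are then standard.
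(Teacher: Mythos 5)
The paper offers no proof of Proposition~\ref{polynomialgrobner}: it is stated as a known fact with pointers to the cited sources (Wu, Gerdt), so there is no in-text argument to compare yours against. Your proof is correct and is, in essence, the standard argument that those references rely on. The dictionary you set up is accurate: over a field of constants, \eqref{correspondence} intertwines $\partial_j$ with multiplication by $x_j$; the homogeneous degree-one part of $[\tilde{f}_1,\dots,\tilde{f}_r]$ is exactly the $\K$-span of the $\theta\tilde{f}_k$ (the degree-one component of $\sum h_{k,\theta}\,\theta\tilde{f}_k$ only picks up the constant terms of the $h_{k,\theta}$) and hence corresponds to $I$; an orderly ranking induces a graded monomial order; and for a homogeneous linear polynomial the initial and the separant coincide with the (nonzero, constant) coefficient of the leader, so differential reduction by $C$ is step-for-step the multivariate division algorithm by $\{g_1,\dots,g_s\}$, with ``reduced with respect to $C$'' translating to ``no monomial divisible by a leading monomial of some $g_i$.'' The two remaining inputs --- that $C\subseteq[F]$, so each $g_i\in I$, and that membership in the prime ideal $[F]$ is detected by vanishing of the differential remainder with respect to its characteristic set $C$ --- are standard and are supplied by the paper's surrounding discussion. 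Your closing standard-representation argument then correctly shows that the leading monomial of every nonzero $f\in I$ is divisible by some leading monomial of a $g_i$, which (together with $g_i\in I$) is the definition of a Gr\"obner basis; the fact that the $g_i$ also generate $I$ follows from this as usual. In short: no gap, and your write-up usefully makes explicit an argument the paper delegates entirely to citations.
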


By Proposition~\ref{polynomialgrobner}, we can thus find a lower bound for the orders of the output of {\sf Rosenfeld-Gr\"obner} via a lower bound for the degrees of elements of a Gr\"obner basis, as we do in the following example.

\begin{example}
This example demonstrates the lower bound \eqref{eq:lowerbound} for the orders of the output of {\sf Rosenfeld-Gr\"obner}.  In \citep[Section~8]{Yap}, for $m,h$ sufficiently large, a collection of $m$ algebraic polynomials $f_1,\dots,f_r$ of degree at most $h$ in $m$ algebraic indeterminates, with $r \sim m/2$, is constructed such that any Gr\"obner basis of $(f_1,\dots,f_r)$ with respect to a graded monomial order has an element of degree at least $h^{2^r}$.  

As a result of the previous discussion, we have a collection of differential polynomials $F = \tilde{f}_1,\dots,\tilde{f}_r \in \K\{y\}$ of order $h$ with $m$ derivations such that any linear characteristic set of $[\tilde{f}_1,\dots,\tilde{f}_r]$ will contain a differential polynomial of order at least $h^{2^r}$.  Since in this case $\{(A,H)\} = \mbox{\sf Rosenfeld-Gr\"obner}(F,\{1\})$ can be transformed into a linear characteristic set without affecting the orders of the elements, this means that
\[
\mathcal{H}(A \cup H) \geqslant h^{2^r}.
\]
\end{example}

\bibliographystyle{elsarticle-harv}
\bibliography{bibdata}
\end{document}